\theoremstyle{plain} %default
\newtheorem{thm}{Theorem}[section]
\newtheorem{lem}[thm]{Lemma}
\newtheorem{prop}[thm]{Proposition}
\newtheorem{cor}[thm]{Corollary}
\newtheorem*{thmA}{Theorem A}
\newtheorem*{thmB}{Theorem B}
\theoremstyle{definition}
\newtheorem{dfn}[thm]{Definition}
\newtheorem{construct}[thm]{Construction}
\newtheorem{eg}[thm]{Example}
\newtheorem{rmk}[thm]{Remark}
\numberwithin{equation}{section}
\newcommand{\tensor}{\otimes}
 \DeclareMathOperator{\Tor}{Tor}
 \DeclareMathOperator{\Ext}{Ext}
 \DeclareMathOperator{\Hom}{Hom}
\DeclareMathOperator{\add}{add}
 \DeclareMathOperator{\Se}{S}
 \DeclareMathOperator{\pd}{pd}
 \DeclareMathOperator{\depth}{depth}
 \DeclareMathOperator{\syz}{syz}
  \DeclareMathOperator{\modu}{mod}
 \DeclareMathOperator{\MCM}{MCM}
  \DeclareMathOperator{\MF}{MF}   
      \DeclareMathOperator{\gl}{gl.dim}
\newcommand{\Ann}{\textup{Ann}}
\newcommand{\ses}[3]{0 \to {#1} \to {#2} \to {#3} \to 0}
 \def\depth{\text{\rm depth}\,}
 \def\<{\langle}
 \def\>{\rangle}
 \newcommand{\bbar}[1]{\setbox0=\hbox{$#1$}\dimen0=.2\ht0 \kern\dimen0 \overline{\kern-\dimen0 #1}}
 \DeclareMathOperator{\coker}{coker}
 \DeclareMathOperator{\End}{\ensuremath{\mathcal{E}\kern-.125em\mathpzc{nd}}}
 \DeclareMathOperator{\id}{id}
 \newcommand{\m}{\mathfrak{m}}
 \DeclareMathOperator{\Proj}{\mathcal{P}\kern-.125em\mathpzc{roj}}
 \DeclareMathOperator{\rk}{rk}
 \DeclareMathOperator{\spec}{Spec}
 \newcommand{\udot}{\ensuremath{{\lower .183333em \hbox{\LARGE \kern -.05em$\cdot$}}}}
 \newcommand{\ZZ}{\mathbb{Z}}
\begin{document}

\bibliographystyle{plain}

\title[finite global dimension of endomorphism rings]{Vanishing of Ext, cluster tilting modules and finite global dimension of endomorphism rings}
%\author{Hai Long Dao}

\author{Hailong Dao}
\address{Department of Mathematics\\
University of Kansas\\
 Lawrence, KS 66045-7523 USA}
\email{hdao@math.ku.edu}

\author{Craig Huneke}
\address{Department of Mathematics\\
University of Kansas\\
 Lawrence, KS 66045-7523 USA}
\email{huneke@math.ku.edu}

\thanks{The authors are partially supported by NSF grants DMS 0834050 and DMS 0756853}

\subjclass [2000]{13D07, 16G30, 16G50}

\keywords{Ext vanishing, cluster tilting, Auslander correspondence, non-commutative crepant resolutions}

\maketitle

\begin{abstract}
Let $R$ be a Cohen-Macaulay ring and $M$ a maximal Cohen-Macaulay $R$-module. Inspired by recent striking work by Iyama, Burban-Iyama-Keller-Reiten and Van den Bergh we study the question of when 
the endomorphism ring of $M$ has finite global dimension via certain conditions about vanishing of $\Ext$ modules. 
We are able to strengthen certain results by  Iyama on connections between a higher dimension version of Auslander correspondence and existence of non-commutative crepant resolutions. We also recover and extend to positive characteristics a recent Theorem by  Burban-Iyama-Keller-Reiten on cluster-tilting objects in the category of maximal Cohen-Macaulay modules over  reduced $1$-dimensional hypersurfaces. 
\end{abstract}

\section{Introduction}

Let $R$ be a Cohen-Macaulay ring and $\MCM(R)$ be the category of maximal Cohen-Macaulay modules over $R$. Let us recall:
\begin{dfn}\label{defRe}
An $R$-module $M \in \MCM(R)$ is called \textit{cluster tilting} if: 
$$\add(M) = \{X\in \MCM(R)  | \Ext_R^1(X,M)=0\} = \{X\in \MCM(R) | \Ext_R^1(M,X)=0\} $$

\end{dfn}

Here $\add(M)$ consists of direct summands of direct sums of copies of $M$. 

The concept of cluster-tilting objects in the context of maximal Cohen-Macaulay modules and generalized versions  come up     in the  study of  higher-dimensional analogues of Auslander-Reiten theory for Artin algebra. The existence of these objects have remarkable ramifications on both the singularity of $\spec(R)$ and the representation theory of $R$, in a manner similar to the famous McKay correspondence. As a consequence, cluster-tilting modules  have been studied intensively in recent years. 

It is not easy to understand cluster-tilting modules concretely, even when $R$ is relatively simple. A striking result  was recently obtained by Burban, Iyama, Keeler and Reiten in \cite{BIKR}. Their main Theorem states: 

\begin{thm}(\cite[Theorem 1.5]{BIKR})\label{BIKR}
Let $k$ be an algebraically closed field of characteristic $0$ and $R=k[[x,y]]/(f)$ be a one-dimensional reduced hypersurface singularity. Then $\MCM(R)$ has a cluster-tilting object if and only if $f= f_1\cdots f_n$ such that $f_i\notin (x,y)^2$ for each $i$. 
\end{thm}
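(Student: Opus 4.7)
The plan is to prove each direction of the biconditional, using matrix factorizations over $S = k[[x,y]]$ and Auslander--Reiten theory on $\MCM(R)$ as the main technical tools. Write $R = S/(f)$. Since $f$ is reduced, every $X \in \MCM(R)$ is torsion-free, and by Eisenbud's theorem each admits a matrix factorization over $S$, giving a $2$-periodic free $R$-resolution. For any divisor $g \mid f$ the ideal $(g) \subset R$ is isomorphic to $R/(f/g)$, producing the short exact sequence
$$0 \to R/(f/g) \to R \to R/(g) \to 0$$
and the $2$-periodic resolution $\cdots \to R \xrightarrow{f/g} R \xrightarrow{g} R \to R/(g) \to 0$. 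The hypothesis $f_i \notin (x,y)^2$ is equivalent to the branch $R/(f_i)$ being a DVR, hence to the normalization of $R$ being $\tilde{R} = \prod_{i=1}^n R/(f_i)$.

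For direction $(\Leftarrow)$, assuming every $f_i \notin (x,y)^2$, I would try a candidate of the form
$$M = R \oplus R/(f_1) \oplus R/(f_1 f_2) \oplus \cdots \oplus R/(f_1 \cdots f_{n-1}),$$
possibly adjusted by replacing some quotients with suitable overrings of $R$ inside $\tilde{R}$ to secure the Ext vanishing. The verification of cluster tilting has two parts. First, $\Ext^1_R(M,M) = 0$, which is checked directly from the $2$-periodic resolutions of each summand: each branch being a DVR makes the multiplication maps by divisors of $f$ on the relevant quotients behave transparently, so that the cycle-to-boundary comparisons all collapse. Second, the generating condition that any $X \in \MCM(R)$ with $\Ext^1_R(X,M) = 0$ lies in $\add(M)$, and the symmetric statement for $\Ext^1_R(M,X) = 0$. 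With every branch regular, $\tilde{R}$ is a product of DVRs and indecomposable MCM $R$-modules can be described combinatorially by their pullback data along $R \hookrightarrow \tilde{R}$; Ext-orthogonality to $M$ then forces this data for $X$ to match that of a summand of $M$.

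For direction $(\Rightarrow)$, suppose $f_1 \in (x,y)^2$ and $M \in \MCM(R)$ is cluster tilting. Then the branch $R_1 = R/(f_1)$ has embedding dimension $2$ and is a non-regular Gorenstein local ring of dimension one. The strategy is to use Auslander--Reiten duality on the stable category of $\MCM(R)$, which for one-dimensional hypersurface singularities acquires a self-dual form from the matrix-factorization periodicity $\syz^2 = \id$. Cluster tilting forces $\add(M)$ to be closed under the AR translate $\tau$, and I would argue that the AR sequences associated to modules supported on the $f_1$-branch (concretely: the sequence involving an indecomposable coming from the maximal ideal of $R_1$) produce a configuration incompatible with any $\tau$-closed, Ext-rigid subcategory of $\MCM(R)$ that also contains $R$. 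Constructing this obstruction --- pinpointing the bad AR sequences forced by $f_1 \in (x,y)^2$ and ruling out every candidate $M$ --- is the main technical hurdle; whereas $(\Leftarrow)$ is essentially a finite check on an explicit candidate, direction $(\Rightarrow)$ requires a global argument about the AR quiver and the action of $\tau$.
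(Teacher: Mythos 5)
The paper (Theorems \ref{mainHyper1} and \ref{mainHyper2}) deliberately avoids Auslander--Reiten theory, while your $(\Rightarrow)$ direction is built on it; so the two approaches diverge sharply there. More importantly, both halves of your sketch have real gaps.

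For $(\Leftarrow)$, you and the paper use the same candidate $S^{\omega} = R \oplus S/(f_1) \oplus \cdots \oplus S/(f_1\cdots f_{n-1})$. But your plan for the generating condition --- ``indecomposable MCM $R$-modules can be described combinatorially by their pullback data along $R \hookrightarrow \tilde R$, and Ext-orthogonality forces the data to match a summand of $M$'' --- is not something you can carry out: $R$ may have infinite (even wild) CM type, and no such combinatorial classification exists in general. The paper avoids this entirely. Instead of classifying indecomposables, it shows \emph{directly} that any $M$ orthogonal to $S^\omega$ splits: one proves that $hM$ (with $h=f_1\cdots f_{n-1}$, $g = f_n$) is $R/(g)$-free, lifts a basis to generate a submodule $N\subseteq M$, proves $N$ is $R$-free by a Nakayama argument that uses the vanishing $\Ext^1_R(R/(h),M)=0$, and then shows the quotient $M/N$ is MCM over $S/(h)$ so the sequence splits, allowing induction on $n$. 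That splitting argument is the actual content you'd need, and your sketch skips it. (Also, your remark that the branches being DVRs is what makes $\Ext^1(S_I,S_J)$ vanish is off: the paper's Corollary \ref{subsets}(3) gives $\Ext^1_R(S_I,S_J)=0 \Leftrightarrow I\subseteq J$ or $J\subseteq I$ without any hypothesis on the $f_i$; the hypothesis $f_i\notin\m^2$ is needed only for the generating half.)

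For $(\Rightarrow)$, the gap is more serious because you haven't identified the key preliminary fact: any indecomposable rigid object in $\MCM(R)$ must be of the form $S_I = S/(f_I)$ (Proposition \ref{rigid}). Without this, you cannot pin down what a cluster tilting object looks like, and the rest of the argument cannot get off the ground. The paper's proof of this classification goes through Kn\"orrer periodicity: lift to $R' = S[[u,v]]/(uv+f)$, cut by a generic element to land in an $A_n$-type surface singularity where all indecomposables have rank $1$, lift idempotents to conclude rank $1$ for the rigid module, and then read rank-$1$ MCM modules off the class group of $R'$. Once one knows a basic cluster tilting object must be $S^\omega$, a short induction (again peeling off the last branch) produces the contradiction if some $f_i \in \m^2$. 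Your AR-theoretic plan --- ``cluster tilting forces $\add(M)$ to be $\tau$-closed; the AR sequences supported on the $f_1$-branch give a contradiction'' --- is essentially the spirit of the original BIKR argument, but as written it is not a proof: you have neither established the $\tau$-closedness nor exhibited the incompatible AR configuration, and you acknowledge this is ``the main technical hurdle.'' That hurdle is precisely the missing proof.

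So: same candidate module for $(\Leftarrow)$ but the decisive splitting/Nakayama argument is absent; and $(\Rightarrow)$ takes a fundamentally different (AR-theoretic) route that is not carried out and omits the crucial classification of rigid indecomposables.
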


The proof of \ref{BIKR} used sophisticated methods such as Auslander-Reiten sequences and a subtle connection to crepant resolutions of $\spec(R)$ due to a combination of results by Iyama, Katz and  Van den Bergh. Our initial goal for this project was to understand how one can obtain Theorem \ref{BIKR} from a pure homological approach, with a view towards proving it  in more general situations. We manage  to provide relatively simple, self-contained proofs of the above Theorem and  other related results on the existence of  cluster-tilting objects in \cite{BIKR},\cite{I1},\cite{I2}. Our proofs are rather direct (in particular, we do not use birational geometry or Auslander-Reiten theory), and they occasionally give more general results. To be precise, we prove the following, see Theorems \ref{mainHyper1}, \ref{mainHyper2}:

\begin{thmA}
Let $k$ be an algebraically closed field of characteristic not $2$ and $R=k[[x,y]]/(f)$ be a one-dimensional reduced hypersurface singularity. Then $\MCM(R)$ has a cluster-tilting object if and only if $f= f_1\cdots f_n$ such that $f_i\notin (x,y)^2$ for each $i$. 
\end{thmA}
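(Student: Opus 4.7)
The plan is to prove the two implications separately, leveraging the homological machinery developed earlier in the paper: the vanishing-of-$\Ext$ criteria, the Depth Formula, and the link between cluster-tilting objects and finite global dimension of endomorphism rings.

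For the sufficiency direction, assume $f = f_1 f_2 \cdots f_n$ with each $f_i \notin (x,y)^2$, so every branch $R_i = k[[x,y]]/(f_i)$ is a discrete valuation ring. After a relabeling of the factors I would take the explicit candidate
\[
M \;=\; R \oplus R/(f_1) \oplus R/(f_1 f_2) \oplus \cdots \oplus R/(f_1 f_2 \cdots f_{n-1}),
\]
so that $M$ has $n$ indecomposable maximal Cohen-Macaulay summands, each admitting a two-periodic free resolution via a matrix factorization of $f$. The two main checks are: first, $\Ext^1_R(M, M) = 0$, verified by a direct cohomology computation with the periodic resolutions of the cyclic summands $R/(f_1 \cdots f_i)$; and second, that any $X \in \MCM(R)$ satisfying $\Ext^1_R(X, M) = 0$ (or $\Ext^1_R(M, X) = 0$) must lie in $\add(M)$. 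The second step is handled using the known structure of indecomposable MCMs over $R$ as cyclic quotients by products of the $f_i$'s, together with the Ext-vanishing to rule out modules not appearing in the chain defining $M$.

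For the necessity direction, suppose $M \in \MCM(R)$ is cluster-tilting. By Theorem \ref{mainHyper1}, $\Lambda = \End_R(M)$ has finite global dimension (equal to $\dim R + 1 = 2$), and hence is a non-commutative crepant resolution of $R$. I would then argue that the existence of such $\Lambda$ forces each irreducible factor of $f$ to avoid $(x,y)^2$: if some $f_i \in (x,y)^2$, then after an appropriate reduction (localizing away from the other components, or passing to a suitable quotient) one would obtain a non-trivial cluster-tilting structure on the maximal Cohen-Macaulay category of the singular irreducible curve $R_i = k[[x,y]]/(f_i)$, which in turn produces a rigid non-free MCM $R_i$-module. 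A matrix factorization analysis, valid in characteristic not $2$ via a Kn\"orrer-type periodicity reduction, then shows that no such module exists over an irreducible singular one-dimensional hypersurface, yielding the desired contradiction.

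The principal obstacle is the necessity direction. The reduction from the reducible case to the irreducible case requires care, but the crux is the rigidity statement for irreducible singular one-dimensional hypersurfaces; this is where both the paper's $\Ext$-vanishing framework and the characteristic-not-$2$ hypothesis play essential roles. The sufficiency direction, by contrast, amounts to bookkeeping with explicit matrix factorizations once the correct candidate $M$ is identified.
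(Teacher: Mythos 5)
Your proposal has genuine gaps in both directions, and where it does point in a plausible direction it does not match the paper's actual argument.

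In the sufficiency direction, your candidate module $M = R \oplus R/(f_1) \oplus \cdots \oplus R/(f_1\cdots f_{n-1})$ is correct (it is $S^\omega$ in the paper's notation), and the rigidity check via periodic resolutions is routine. But your second step relies on ``the known structure of indecomposable MCMs over $R$ as cyclic quotients by products of the $f_i$'s,'' and this is false: only the \emph{rigid} indecomposables are of the form $S_I$ (Proposition \ref{rigid}). A general $X$ with $\Ext^1_R(X,M) = \Ext^1_R(M,X) = 0$ need not be rigid, and over these rings (which are typically not of finite CM type) there is no such classification. The paper's proof of Theorem \ref{mainHyper2} is a genuinely different and more delicate argument: it never classifies indecomposables, but instead, given such an $X$, takes $h = f_1\cdots f_{n-1}$, observes $hX$ is a free $R/(f_n)$-module, lifts a basis of $hX$ to a free direct summand $N$ of $X$ (using $\Ext^1_R(R/(h),X) = 0$ in an essential way), shows $X/N$ lies in $\MCM(S/(h))$, and then applies induction. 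This construction is not ``bookkeeping''; it is the crux of the theorem and the reason the sufficiency direction holds for \emph{any} two-dimensional complete regular local $S$ with no field or characteristic hypothesis.

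In the necessity direction, your appeal to ``Theorem \ref{mainHyper1}'' is circular: that theorem \emph{is} the necessity direction of the statement you are proving, and in any case it asserts nothing about global dimension. Presumably you intended the global-dimension machinery of Theorem \ref{mainGlobal}, but that result requires $d \ge 3$, so you would first need to lift to dimension $3$ via Kn\"orrer periodicity and then transport back, none of which your sketch carries out; and the asserted $\gl\Lambda = \dim R + 1 = 2$ does not match the NCCR condition $\gl A = d$ in Definition \ref{NCCR}. Your subsequent reduction --- that a cluster-tilting object over the reducible $R$ would somehow induce one over an irreducible component $R_i$ --- is stated without justification and is not at all automatic; passing Ext-vanishing conditions to a quotient ring is exactly where care is needed, and the paper has to prove a specific statement (Corollary \ref{subsets}(5)) to make this kind of move. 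The paper's actual route is: use Proposition \ref{rigid} (via Kn\"orrer lift and a rank-$1$ argument over a generic hyperplane section $R_1 \cong k[[x,y,z]]/(x^2+y^2+z^n)$) to see that every indecomposable rigid object is some $S_I$; use Corollary \ref{subsets}(3) to conclude the cluster-tilting object must be $S^\omega$; then induct on $n$, producing at each stage a non-free $S/(f_n)$-module whose syzygy would have to lie in $\add(S^\omega)$, giving the contradiction when $f_n \in \m^2$. Finally, the ``Depth Formula'' you invoke plays no role in the paper's argument and is not a defined tool here.
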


In fact, for one direction, we only need to assume $R$ is a local complete reduced hypersurface of dimension $1$ (Theorem \ref{mainHyper2}).

Our approach also leads  to modest extensions, in the commutative case, of very interesting results by Iyama  \cite{I1,I2} on the connection between certain maximal orthogonal categories and endomorphism rings of finite global dimensions. For example, our following result (see \ref{mainGlobal}) describes conditions on vanishing of $\Ext$ modules which determine when an endomorphism ring  has finite global dimension  over a Cohen-Macaulay ring.

\begin{thmB}
Let $R$ be a   Cohen-Macaulay ring of dimension $d\geq 3$. Let $M \in \MCM(R)$ such that $M$ has a free summand  and $A=\Hom_R(M,M)$ is $\MCM$. For an integer $n>0$ let:
$$ M^{\perp_n} =  \{X\in \MCM(R) | \Ext_R^i(M,X)=0 \ \text{for} \ 1\leq i \leq n\}$$
Consider the following:
\begin{enumerate}
\item  There exist an integer $n$ such that $1\leq n\leq d-2$ and  $ M^{\perp_n} = \add(M) $.
\item $\gl A\leq d$.
\item $\gl A=d$.
\item $\Ext_R^i(M,M)=0 \ \text{for} \ 1\leq i \leq d-2$ (i.e., $M\in  M^{\perp_{d-2}}$).
\item $ M^{\perp_{d-2}} = \add(M)$.  
\end{enumerate}

Then $(5)\Rightarrow (1) \Rightarrow (2) \Leftrightarrow (3)$. If in addition $R$ is locally Gorenstein on the non-maximal primes of $\spec R$ then $(3)+(4)  \Rightarrow (5)$. If $R$ is also locally regular on the non-maximal primes of $\spec R$ (i.e. $\spec R$ has isolated singularities) then $(3) \Rightarrow (5)$. 
\end{thmB}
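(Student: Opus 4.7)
The implication $(5) \Rightarrow (1)$ is immediate (take $n = d-2$). For $(2) \Leftrightarrow (3)$, only $(2) \Rightarrow (3)$ needs comment: since $A$ is module-finite over $R$ one has $\m A \subseteq \operatorname{rad} A$, so any simple $A$-module $S$ is annihilated by $\m$ and hence has $R$-depth $0$; under (2), $\pd_A S \leq d < \infty$, and the Auslander-Buchsbaum formula for module-finite MCM $R$-algebras yields $\pd_A S = \operatorname{depth}_R A - \operatorname{depth}_R S = d$, so $\gl A \geq d$ and equality holds.

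For $(1) \Rightarrow (2)$, the plan is to build, for any finitely generated $A$-module $N$, an $A$-projective resolution of length $\leq d$. Let $e \in A$ denote the idempotent associated to the decomposition $M = R \oplus M'$, so $eAe = R$ and $e \cdot \Hom_R(M, Y) \cong Y$ naturally for every $R$-module $Y$. Using the equivalence $\Hom_R(M, -): \add M \xrightarrow{\sim} \operatorname{proj}(A)$, I would construct step by step surjections from $\Hom_R(M, M_i)$ onto the successive $A$-syzygies $\Omega_A^i N$, arising from $\add M$-approximations of the $R$-module $e\, \Omega_A^i N$. Since $A$ is MCM, every projective $A$-module is MCM as $R$-module, and iterated use of the depth lemma forces $\Omega_A^d N$ to be MCM as $R$-module. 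By the construction, $\Omega_A^d N$ is isomorphic to $\Hom_R(M, Y)$ for some MCM $R$-module $Y$ with $\Ext_R^i(M, Y) = 0$ for $1 \leq i \leq n$; hypothesis (1) then gives $Y \in \add M$, so $\Omega_A^d N$ is $A$-projective and $\pd_A N \leq d$. The main obstacle will be to verify rigorously that the $A$-level syzygies of this construction correspond, through $\Hom_R(M, -)$ and the idempotent $e$, to $R$-modules lying in $M^{\perp_n}$; this is where both the $R$-summand hypothesis and the Ext-vanishing condition (1) are essential.

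For the implications concluding (5): let $X \in M^{\perp_{d-2}}$. I would apply $\Hom_R(-, X)$ to a free resolution $\cdots \to F_1 \to F_0 \to M \to 0$ to obtain a complex
\[0 \to \Hom_R(M, X) \to \Hom_R(F_0, X) \to \Hom_R(F_1, X) \to \cdots\]
whose terms $\Hom_R(F_i, X) \cong X^{r_i}$ are MCM and whose cohomology $\Ext_R^i(M, X)$ vanishes for $1 \leq i \leq d-2$. Splitting into short exact sequences $0 \to Z^i \to C^i \to Z^{i+1} \to 0$ and applying the depth lemma iteratively (starting from the $(d-1)$-st image, which embeds into an MCM and so has depth $\geq 1$) yields $\operatorname{depth}_R \Hom_R(M, X) \geq d$, so $\Hom_R(M, X)$ is MCM. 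Under (3), $\gl A < \infty$, so Auslander-Buchsbaum gives $\pd_A \Hom_R(M, X) = 0$, i.e., $\Hom_R(M, X)$ is $A$-projective and hence isomorphic to $\Hom_R(M, Y)$ for some $Y \in \add M$; applying $e$ recovers $X \cong Y$ as $R$-modules, so $X \in \add M$. This shows $M^{\perp_{d-2}} \subseteq \add M$, and hypothesis (4) provides the reverse inclusion, completing $(3)+(4) \Rightarrow (5)$. Under the stronger isolated-singularities hypothesis, $\Ext_R^i(M, M)$ has finite length for $i \geq 1$; a local-cohomology analysis of the analogous complex computing $A$ (using the long exact sequences of $H^\bullet_\m$ together with the MCM condition $\operatorname{depth}_R A = d$) should force $\Ext_R^i(M, M) = 0$ for $1 \leq i \leq d-2$, so (4) is automatic and (3) alone suffices.
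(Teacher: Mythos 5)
Your skeleton agrees with the paper for $(5)\Rightarrow(1)$ and $(2)\Leftrightarrow(3)$ (the paper also argues the latter by depth-counting along an $A$-projective resolution of a residue-field-type module, which is the Auslander--Buchsbaum argument you invoke). For $(1)\Rightarrow(2)$ you and the paper both build a projective $A$-resolution by $\add(M)$-approximations, but the ``main obstacle'' you flag is exactly the nontrivial step, and your proposed fix via the idempotent $e$ is not quite the right mechanism: for a general $A$-module $T$ one does not have $T\cong\Hom_R(M,eT)$, so approximating $eT$ need not produce the next syzygy. The observation that makes this go through (the paper's Remark~\ref{rmkHom} and Construction~\ref{cons}) is that the \emph{second} $A$-syzygy of any finite $A$-module is automatically of the form $\Hom_R(M,N_1)$ with $N_1$ a second $R$-syzygy of modules in $\add(M)$, hence $(\Se_2)$: the kernel of a map between projective $A$-modules is $\Hom_R(M,\ker)$ by left-exactness of $\Hom_R(M,-)$. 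From there the free summand of $M$ lets one keep producing surjections $\Hom_R(M,Q_i)\twoheadrightarrow\Hom_R(M,N_i)$, and the Ext-vanishing $N_{i}\in M^{\perp_n}$ for large $i$ follows because $\Ext^1_R(M,N_{i+1})$ embeds in $\Ext^1_R(M,Q_i)=0$ (note that $(1)$ forces $\Ext^j_R(M,M)=0$ for $1\le j\le n$), and the higher $\Ext$'s shift down step by step. So the idea is salvageable, but as written there is a real gap here.

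Your treatment of $(3)+(4)\Rightarrow(5)$ is a genuinely different route from the paper's and is worth noting. The paper shows, via iterated pushforwards (which is where the ``Gorenstein on non-maximal primes'' hypothesis enters), that $\Hom_R(M,X)$ is a $d$-th $A$-syzygy and then invokes $\gl A\le d$. You instead apply $\Hom_R(-,X)$ to a free resolution of $M$, use $\Ext^i_R(M,X)=0$ for $1\le i\le d-2$ and a depth-chase on the resulting left-exact complex of $\MCM$ modules to conclude $\Hom_R(M,X)$ is $\MCM$ directly, and then use the Auslander--Buchsbaum equality for module-finite $R$-algebras to get projectivity. This is cleaner and, interestingly, does not appear to use the codimension-one Gorenstein hypothesis at all; it does require the noncommutative Auslander--Buchsbaum formula (and a word about semiperfectness or passing to completion to justify minimal projective covers), but that is standard. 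The idempotent trick $\Hom_R(M,X)e\cong X$ at the end is correct (you wrote $e\cdot\Hom_R(M,Y)$, but for right $A$-modules one should multiply on the right; this is cosmetic). For the final implication $(3)\Rightarrow(5)$ in the isolated-singularity case, both you and the paper reduce to ``$(4)$ is automatic''; the paper cites a precise lemma (Lemma~\ref{useful}, from \cite{Da3}) whereas your local-cohomology sketch is plausible but hedged (``should force''), so that step would need to be made precise before this counts as a complete proof.
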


One can use the above Theorem to recover  Theorem 5.2.1  in \cite{I2}  which describes similar conditions for modules over isolated singularities.  

Even though the techniques we use provide certain improvements, we would like to stress that the results and methods in the original papers are very insightful, and without the inspiration from those works this project  would not have been conceived.

We now describe the content of our paper. In Section \ref{notation} we gather standard results and notations to be used throughout. Section \ref{globalDim} is devoted to proving \ref{mainGlobal}, a key result on cluster-tilting type conditions and finiteness of global dimension of endomorphism rings. In Section \ref{dim1} we study the existence of cluster-tilting modules over dimension $1$, reduced local hypersurfaces. 

\medskip

\section{Notations and preliminary results}\label{notation}
Let $R$ be a local  Cohen-Macaulay ring. Let  $\MCM(R)$ denote the category  of maximal Cohen-Macaulay modules over $R$. 
The stable category of $\MCM$ $R$-modules $\underline{\MCM}(R) $ can be defined as follows: the objects are objects in 
$\MCM(R)$ and morphisms are defined by:
$$\underline{\Hom}_R(M,N) = \Hom_R(M,N)/I $$
where $I$ denotes the set of morphisms which factor through some free $R$-modules.

\subsection{Matrix factorization and $\MCM$ modules over hypersurfaces}
Suppose that $R=S/(f)$ with $S$ a regular local ring.  A matrix factorization of $f$ is a pair of homomorphisms between free modules $\varphi: F\to G$ and $\psi: G\to F$ such that $\varphi\psi = f\id_G$ and $\psi\varphi = f\id_F$. A morphism between two factorizations is a commutative diagram: 

\[
\xymatrix{
F\ar[r]^{\varphi}  \ar[d]^{\alpha}  & G  \ar[r]^{\psi}  \ar[d]^{\beta}  &F \ar[d]^{\alpha} \\
F'\ar[r]^{\varphi'}   & G'  \ar[r]^{\psi'}   &F' }
\]

It follows that the matrix factorizations of $f$ form a category denoted by $\MF_S(f)$. If we identify morphisms which are homotopic (see, for example \cite{Yo}) we get the stable category $\underline {\MF}_S(f)$. 
The functor which takes a matrix factorization $(\varphi, \psi)$ to $\coker \varphi$ induces a map  of categories between $\MF_S(f) $and $\MCM(R)$ and an equivalence of categories between $\underline {\MF}_S(f)$ and $\underline{\MCM}(R) $.

\subsection{Kn\"{o}rrer periodicity} We now assume that $S=k[[x_1,\cdots,x_n]]$ and $R=S/(f)$ with $k$ an algebraically closed field of characteristic not equal to $2$. The important result below is due to Kn\"{o}rrer in \cite{K}, (see  also \cite{So} for related results in characteristic $2$ case): 

\begin{thm}\label{Kno}
The functor:
$$H: \MF_S(f) \to \MF_{S[[u,v]]}(f+uv)$$
that associates $(\varphi, \psi) \in \MF_S(f)$ to the factorization given by:

\[\bigg(
\begin{bmatrix}
u  & \psi \\
\varphi  &{-v}
\end{bmatrix},
\begin{bmatrix}
v & \psi \\
\varphi &-u
\end{bmatrix}
\bigg)
\]
induces an equivalence of categories between  $F: \underline{\MCM}(R) \simeq \underline{\MCM}(R^{\sharp}) $ with $R^{\sharp}=S[[u,v]]/(f+uv)$.
\end{thm}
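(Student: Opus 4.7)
The plan is to follow Kn\"orrer's classical strategy: verify that $H$ is functorial and descends to the stable category, construct a quasi-inverse via a reduction to quadratic extensions, and then match the composition with the explicit formula in the statement.

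First I would verify the defining matrix identities. Writing $\Phi = \begin{pmatrix} u & \psi \\ \varphi & -v \end{pmatrix}$ and $\Psi = \begin{pmatrix} v & \psi \\ \varphi & -u \end{pmatrix}$, and using that $u,v$ are central scalars while $\varphi\psi = f\,\id_G$ and $\psi\varphi = f\,\id_F$, direct block multiplication gives
\[
\Phi\Psi = \begin{pmatrix} uv\,\id_F + \psi\varphi & 0 \\ 0 & \varphi\psi + uv\,\id_G \end{pmatrix} = (f+uv)\,\id,
\]
and symmetrically $\Psi\Phi = (f+uv)\,\id$. Functoriality on morphisms comes from sending $(\alpha,\beta)$ to the block-diagonal pair $(\alpha \oplus \beta,\ \alpha \oplus \beta)$, and a trivial factorization such as $(1,f)$ is sent to a $2 \times 2$ matrix factorization whose cokernel, by an immediate column-reduction, is the free module $R^\sharp$; hence trivial factorizations become stably trivial and $H$ descends to $\bar H : \underline{\MCM}(R) \to \underline{\MCM}(R^\sharp)$.

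The heart of the proof is the construction of a quasi-inverse. Since $\operatorname{char} k \neq 2$ and $k$ is algebraically closed, the coordinate change $u = u' + iv',\ v = u' - iv'$ with $i^2 = -1$ gives a ring isomorphism $R^\sharp \cong S[[u',v']]/(f + u'^2 + v'^2)$, reducing the result to two applications of the single-variable Kn\"orrer equivalence: for every regular local $T$ and every $g \in T$, the natural functor $\underline{\MCM}(T/(g)) \to \underline{\MCM}(T[[w]]/(g+w^2))$ is an equivalence. For the single-variable step I would exploit the involution $\sigma : w \mapsto -w$ on $R' = T[[w]]/(g+w^2)$, whose fixed subring is $T/(g)$. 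Since $1/2 \in T$, every $R'$-module splits canonically into $\sigma$-eigenspaces, and this produces an adjoint pair consisting of induction $M \mapsto R' \otimes_{T/(g)} M$ and $\sigma$-invariants $N \mapsto N^\sigma$. A direct matrix-factorization calculation then shows that both the unit and the counit become isomorphisms after killing free summands, so the pair descends to mutually inverse equivalences of stable categories. Iterating this equivalence twice and tracking the coordinate change $(u,v) \leftrightarrow (u',v')$ should reproduce, up to stable natural isomorphism, the explicit block matrix defining $H$.

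The main obstacle is precisely this last piece of bookkeeping: verifying that the abstract stable equivalence obtained from the two-step quadratic reduction agrees, in the stable category, with the specific block matrix displayed in the statement. A cleaner alternative is to bypass the reduction and define a direct quasi-inverse $G$ on $\MF_{S[[u,v]]}(f+uv)$ by sending $(A,B)$ to the matrix factorization of $f$ extracted from $A \bmod (u,v)$ via a Schur-complement splitting, then writing down explicit null-homotopies for $GH - \id$ and $HG - \id$. These homotopies involve division by $2$, which is where the hypothesis $\operatorname{char} k \neq 2$ enters in an essential way, and checking that they are natural and well-defined on morphisms is the most delicate part of the argument.
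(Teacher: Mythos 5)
First, a remark on scope: the paper does not prove Theorem~\ref{Kno} at all---it simply cites Kn\"orrer's original paper \cite{K}---so there is no in-paper argument to compare against. I will therefore just evaluate your proposal on its own terms.

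Your verification of the matrix identities $\Phi\Psi = \Psi\Phi = (f+uv)\,\id$, the behavior on morphisms, and the observation that trivial factorizations map to stably trivial ones are all correct. However, the heart of your argument contains a genuine error. You assert that for every regular local $T$ and $g\in T$, the natural functor
\[
\underline{\MCM}(T/(g)) \longrightarrow \underline{\MCM}\bigl(T[[w]]/(g+w^2)\bigr)
\]
is an equivalence, and propose to obtain Kn\"orrer periodicity by applying this twice. This one-variable statement is \emph{false}. Take $T=k[[x]]$ and $g=x^2$. Then $\underline{\MCM}(k[[x]]/(x^2))$ has exactly one indecomposable object (the residue field $k$), whereas $T[[w]]/(g+w^2)=k[[x,w]]/(x^2+w^2)\cong k[[u,v]]/(uv)$ (via $u=x+iw$, $v=x-iw$) is the $A_1$ curve singularity, whose stable category has \emph{two} nonisomorphic indecomposables, namely $R^{\sharp}/(u)$ and $R^{\sharp}/(v)$. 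So the two stable categories cannot be equivalent. What is actually true is that for the branched double cover $R^{\flat}=T[[w]]/(g+w^2)$, the adjoint pair $(p^*, p_*)$ (induction and restriction) satisfies $p_*p^*\cong \id\oplus\Omega$ and $p^*p_*\cong \id\oplus\sigma$, where $\Omega$ is the syzygy and $\sigma$ is the involution $w\mapsto -w$; neither functor is an equivalence. Kn\"orrer's theorem---that adding \emph{two} variables gives an equivalence---cannot be decomposed as a composition of two single-variable equivalences, because the intermediate step is not an equivalence. The correct proof (following Kn\"orrer, or Yoshino's exposition in \cite{Yo}, Chapter~12) carefully tracks the $\mathbb{Z}/2$-action induced by $\sigma$ and uses the hypothesis $\operatorname{char} k\neq 2$ to split the extra summands; the two-variable equivalence then appears only after this equivariant bookkeeping, not by naively iterating a one-variable equivalence. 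Your ``cleaner alternative'' via Schur complements is too vague to assess, but as stated it does not repair this gap. You will need to replace the quasi-inverse construction entirely.
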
 

The next Proposition lists a couple of convenient facts about the functor $F$ we shall need. 

\begin{prop}
We use the same setting of Theorem \ref{Kno}. For any two modules $M,N \in \MCM(R)$ we have:
\begin{enumerate}
\item $F(\syz_1^R(M)) \cong \syz_1^{R^{\sharp}}(F(M))$.
\item $\Ext_R^i(M,N)=0$ if and only if $\Ext_{R^{\sharp}}^i(F(M),F(N))=0$. 
\end{enumerate}
\end{prop}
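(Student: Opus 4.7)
For part (1), the plan is to work entirely through the matrix factorization description and Theorem~\ref{Kno}. If $M = \coker \varphi$ for $(\varphi,\psi) \in \MF_S(f)$, then in $\underline{\MCM}(R)$ the first syzygy $\syz_1^R(M)$ corresponds to the ``swapped'' factorization $(\psi,\varphi)$, so $F(\syz_1^R M)$ is the cokernel of the first component of $H(\psi,\varphi)$, namely $\begin{bmatrix} u & \varphi \\ \psi & -v \end{bmatrix}$. On the other hand, $F(M)$ is the cokernel of the first component of $H(\varphi,\psi)$, and its first syzygy in $\underline{\MCM}(R^{\sharp})$ is the cokernel of the second component $\begin{bmatrix} v & \psi \\ \varphi & -u \end{bmatrix}$. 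A direct verification---swap the two block rows, swap the two block columns, then negate one resulting row and one resulting column---turns one of these matrices into the other; since each manipulation is an invertible change of basis on the source or target, the cokernels are isomorphic.

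For part (2), the plan is to reduce to part (1) via the standard identity
$$\Ext_R^i(M,N) \cong \underline{\Hom}_R(\syz^i M, N), \qquad i \geq 1,$$
valid whenever $M \in \MCM(R)$, where $\syz^i M$ denotes the $i$-th iterated syzygy. One obtains this by applying $\Hom_R(-,N)$ to a short exact sequence $0 \to \syz_1 M \to R^n \to M \to 0$ and observing that the image of $\Hom_R(R^n,N)$ in $\Hom_R(\syz_1 M,N)$ is precisely the set of morphisms factoring through a free module, so the cokernel of that map is the stable Hom; then iterate. Combining with (1) applied $i$ times (so that $F(\syz^i M) \cong \syz^i(FM)$ in $\underline{\MCM}(R^{\sharp})$) and with Theorem~\ref{Kno} (which says $F$ induces bijections on stable Hom sets) yields
$$\Ext_R^i(M,N) \cong \underline{\Hom}_R(\syz^i M, N) \cong \underline{\Hom}_{R^{\sharp}}(\syz^i FM, FN) \cong \Ext_{R^{\sharp}}^i(FM, FN),$$
which in fact gives an isomorphism of Ext modules, stronger than the equivalence of vanishing in the statement.

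The main obstacle is really verifying (1): the argument reduces to bookkeeping with $2 \times 2$ block matrices, but one has to be careful that each row/column manipulation is a genuine invertible change of basis over $S[[u,v]]$ (and therefore descends to an isomorphism of cokernels), not merely an operation respecting the weaker equivalence relation on matrix factorizations. Once (1) is in hand, (2) is essentially formal from the equivalence $\underline{\MCM}(R) \simeq \underline{\MCM}(R^{\sharp})$ together with the standard syzygy description of Ext.
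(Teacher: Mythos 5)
Your proposal is correct and follows essentially the same strategy as the paper's (very terse) proof: part (1) by identifying $\syz_1$ with the swap $(\varphi,\psi) \mapsto (\psi,\varphi)$ on matrix factorizations and then comparing the two resulting Kn\"orrer images by explicit row/column operations, and part (2) by reducing to (1) via $\Ext_R^i(M,N) \cong \underline{\Hom}_R(\syz^i M, N)$. Your write-up supplies the matrix bookkeeping for (1) that the paper leaves implicit; the one small point worth making explicit in (2) is that the identity $\Ext^i(M,N) \cong \underline{\Hom}(\syz^i M, N)$ requires $\Ext^i_R(M,R)=0$, which holds here because $R$ is a hypersurface (hence Gorenstein) and $M$ is maximal Cohen--Macaulay.
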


\begin{proof}
$(1)$ follows from the fact that the first syzygy of $M$ corresponds to the same pair of matrices but with reverse order. $(2)$ is a combination of $(1)$ and the fact that:
$$\Ext_R^1(M,N) \cong  \underline{\Hom}_R(\syz_1^R(M),N)$$ 
 
\end{proof}

\subsection{Pushfowards and Serre's conditions $(\Se_n)$}
Let $R$ be a commutative Noetherian ring and $M$ a finite $R$-module. 
For a non-negative integer $n$, $M$ is said to satisfy $(\Se_n)$ if:
$$ \depth_{R_p}M_p \geq \min\{n,\dim(R_p)\} \ \forall p\in \spec(R)$$

Now, suppose that $R$  is locally Gorenstein in codimension $1$. 
Let  $M$ be a torsion-free (equivalent to
$(\Se_1)$) $R$-module. Consider a short exact sequence : $$0 \to W
\to R^{\lambda} \to M^* \to 0 $$ Here $\lambda $ is the minimal
number of generators for $M^*$. Dualizing this short exact sequence
and noting that $M$ embeds into $M^{**}$ we get an exact sequence:
$$ 0 \to M \to R^{\lambda} \to M_1 \to 0$$
This exact sequence is called the \textit{pushforward} of $M$. The following result is modified from \cite[1.6]{HJW}:

\begin{prop}\label{pushforward}
Let $R$ be a  Cohen-Macaulay ring which is Gorenstein in codimension one with $\dim R =d\geq 2$. Let $M,M_1$ be as above. Then for any $p\in \spec(R)$:

\begin{enumerate}
\item $M_p$ is free if and only if $(M_1)_p$ is free. 
\item  If $R_p$ is Gorenstein  and $M_p$ is a maximal Cohen-Macaulay $R_p$-module, then so is $(M_1)_p$.
\item $\depth_{R_p}(M_1)_p \geq \depth_{R_p}M_p -1$.
\item  If $M$ satisfies $(\Se_k)$, then $M_1$ satisfies
$(\Se_{k-1})$ for any $1 \leq k \leq d$.
\end{enumerate}
\end{prop}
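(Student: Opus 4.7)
The plan is to run all four parts off the two short exact sequences
\[
0 \to M \to R^{\lambda} \to M_1 \to 0 \quad \text{and} \quad 0 \to W \to R^{\lambda} \to M^* \to 0,
\]
localizing at a prime $p \in \spec R$ and combining the depth lemma with a reflexivity argument. Part (3) will follow immediately from the first sequence at $p$ by the standard depth inequality: $\depth (M_1)_p \geq \min(\depth M_p - 1, \depth R_p^{\lambda})$, and since $R$ is Cohen--Macaulay the second term equals $\dim R_p \geq \depth M_p$, so the bound reduces to $\depth M_p - 1$. Part (1) is a splitting argument: if either $M_p$ or $(M_1)_p$ is free, the local version of the first sequence splits and exhibits the other module as a direct summand of the free module $R_p^{\lambda}$, hence free over the local ring $R_p$.

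For (2), under the hypotheses $M_p$ is reflexive and $M_p^*$ is MCM over the Gorenstein local ring $R_p$, so $\Ext^1_{R_p}(M_p^*, R_p) = 0$. Dualizing the second sequence at $p$ therefore yields the exact sequence $0 \to M_p \to R_p^{\lambda} \to W_p^* \to 0$, which identifies $(M_1)_p$ with $W_p^*$. Since $W_p$ is the first syzygy of the MCM module $M_p^*$ over the Gorenstein ring $R_p$ it is MCM, and hence so is its $R_p$-dual.

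For (4) I would argue prime by prime. At $p$ of codimension at most one, $R_p$ is Gorenstein by hypothesis and the $(S_k)$-condition with $k \geq 1$ forces $M_p$ MCM, so (2) produces $(M_1)_p$ MCM, which is strictly stronger than the required $(S_{k-1})$-bound there. At $p$ with $\dim R_p \geq k$, the $(S_k)$-bound $\depth M_p \geq k$ combined with (3) gives $\depth (M_1)_p \geq k - 1 = \min(k-1, \dim R_p)$, as needed. The principal obstacle is the intermediate range $2 \leq \dim R_p < k$: in this range one needs $(M_1)_p$ to be MCM, while (3) alone only delivers depth $\dim R_p - 1$, so the argument must again feed through the duality isomorphism $(M_1)_p \cong W_p^*$ from (2), which is available wherever $R_p$ happens to be Gorenstein. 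Threading these cases together, and checking that the pushforward construction was set up precisely to make this possible, will be the technical heart of the proof and the place where care is required to recover the statement of HJW.
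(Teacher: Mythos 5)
Your handling of parts (2) and (3) is exactly the paper's argument: for (2), dualize the presentation of $M_p^*$ at a Gorenstein prime $p$ to identify $(M_1)_p$ with $W_p^*$, the $R_p$-dual of a syzygy of the MCM module $M_p^*$, and for (3) the standard depth count on $0 \to M_p \to R_p^{\lambda} \to (M_1)_p \to 0$. Both are fine.

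One small repair is needed in (1). The direction ``$(M_1)_p$ free $\Rightarrow M_p$ free'' does come from splitting, since $(M_1)_p$ is then projective. But the reverse direction does not follow from ``$M_p$ is free, hence the sequence splits'': an injection of free modules into a free module need not split (think $0 \to R_p \xrightarrow{x} R_p \to R_p/x \to 0$). What actually makes it split is structural: if $M_p$ is free, then $M_p^*$ is free, so $R_p^{\lambda} \twoheadrightarrow M_p^*$ splits; dualizing, $M_p^{**} \hookrightarrow R_p^{\lambda}$ is a split injection, and $M_p = M_p^{**}$ by freeness. Equivalently, one can observe directly that $W_p$ and hence $W_p^*$ is free and that $\Ext^1_{R_p}(M_p^*, R_p) = 0$, so $(M_1)_p \cong W_p^*$ is free.

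The interesting point is your discussion of (4), where you have in fact put your finger on a real gap that the paper's one-line proof (``(4) is a combination of (2) and (3)'') does not address. Under the stated hypothesis --- $R$ Gorenstein only in codimension one --- at a prime $p$ with $2 \leq \dim R_p \leq k-1$ one needs $(M_1)_p$ to be MCM, since $\min(k-1,\dim R_p) = \dim R_p$ there, but (3) only produces $\depth(M_1)_p \geq \dim R_p - 1$, and (2) is unavailable because $R_p$ need not be Gorenstein. (One can check that $(M_1)_p$ is a submodule of $W_p^*$ with cokernel $\Ext^1_{R_p}(M_p^*,R_p)$, and nothing in the hypotheses kills that $\Ext$.) So the argument by (2)+(3) does not establish (4) for $k \geq 3$ at such primes, and the statement as written appears to require more. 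In the only place the paper actually invokes part (4) --- the step in the proof of Theorem~\ref{mainGlobal} that iterates the pushforward --- $R$ is assumed locally Gorenstein at all non-maximal primes, so (2) is available off the closed point, and at $\m$ itself one has $\dim R_\m = d \geq k$ so (3) gives $\depth(M_1)_\m \geq k-1$. Under that stronger hypothesis your ``threading'' closes the gap cleanly; under the weaker hypothesis of the proposition it does not, and your instinct that this is where the technical care lies is correct.
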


\begin{proof}
(1) is easy. For (2) as  $R_p$ is Gorenstein, it follows that  $M_p^*\in \MCM(R_p)$ and dualizing:
$$\ses{W_p}{F_p}{M_p^*} $$
gives $(M_1)_p \cong W^*_p$, which is in $\MCM(R_p)$. 
Assertion (3) follows by counting depths, and (4) is a combination of (2) and (3). Note that since $d\geq 2$, one can use pushforward even when $k=1$. 
\end{proof}

\subsection{Endomorphism rings and global dimensions}\label{Gldim}
Let $R$ be a Noetherian commutative ring and $A$ be an (not necessarily commutative)  Noetherian $R$-algebra. The {\it global dimension} of $A$, $\gl (A)$,  is the supremum of the set of projective dimensions of all (right) A-modules. Since $A$ is Noetherian, one can also define  $\gl(A)$ using left $A$-modules (\cite[Chapter 7, 1.11]{McRo}). 

Throughout the paper we will be interested only in the case $A=\Hom_R(M,M)$, for $M$ a finitely generated $R$-module. Consequently, $A$ is a Noetherian $R$-algebra. Let $N$ be another $R$-module. Then $\Hom_R(M,N)$ has a natural structure as a right $A$-module given as follows. For $t\in A$ and $f\in \Hom_R(M,N)$, define $ft(a) = f(t(a))$ for $a \in M$. 

We note that sometimes in the literature one works over $A^{op}$, the opposite ring of $A$. It makes no difference in our situation, since  $$\gl (A) = \gl(A^{op})$$ as left $A^{op}$-modules can be identified  naturally with right $A$-modules. Also note that $A$ and $A^{op}$ are naturally isomorphic as $R$-modules. For more generalities on endomorphism rings, we refer to the excellent books \cite{ARS} (especially II.1, II.2) and \cite{McRo}. 

\section{$\Ext$ vanishing and global dimension of endomorphism rings}\label{globalDim}
In this section we prove connections between certain $\Ext$ vanishing conditions and endomorphism rings of finite global dimension.  To motivate such study, we recall a  definition by Van den Bergh (see \cite[4.1]{V1}):

\begin{dfn}\label{NCCR}
Let $R$ be a  Cohen-Macaulay normal domain. Suppose that there exists a reflexive module $M$ satisfying:
\begin{enumerate}
\item $A = \Hom_R(M,M)$ is maximal Cohen-Macaulay $R$-module.
\item $A$ has finite global dimension equal to $d=\dim R$.
\end{enumerate} 
Then $A$ is called a non-commutative crepant resolution (henceforth NCCR) of $R$. If $A$ satisfies condition (2) only, it is called a non-commutative desingularization of $R$. 
\end{dfn}

Our starting point is a very interesting result discovered by Iyama in his study of higher-dimensions versions of Auslander correspondence between finite representation type and finite global dimension of additive generator for Artin algebras: 

\begin{thm}(Iyama, \cite[5.2.1]{I2})\label{Iyama}
Let $R$ be a  Cohen-Macaulay  ring with isolated singularities  of dimension $d\geq 3$ and such that $R$ has a canonical module $\omega$. Let $M\in \MCM(R)$ and $A=\Hom_R(M,M)$. The following are equivalent:
\begin{enumerate} 
\item $R,\omega \in \add(M)$, $A\in \MCM(R)$ and $\gl A =d$. 
\item  \begin{eqnarray*} \add(M)= \{X\in \MCM(R) | \Ext_R^i(M,X)=0 \ \text{for} \ 1\leq i \leq d-2\} \\
                          = \{X\in \MCM(R) | \Ext_R^i(X,M)=0 \ \text{for} \ 1\leq i \leq d-2\} 
\end{eqnarray*}

\end{enumerate}

\end{thm}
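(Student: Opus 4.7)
The plan is to deduce both directions from Theorem~\ref{mainGlobal} applied to $M$ and to its $\omega$-dual $N := \Hom_R(M,\omega)$, using the exact duality $D := \Hom_R(-,\omega)$ on $\MCM(R)$.

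For $(1) \Rightarrow (2)$, the module $M$ satisfies every hypothesis of Theorem~\ref{mainGlobal}: $R$ is a summand, $A$ is MCM, and $\gl A = d$. Because $R$ has isolated singularities, the implication $(3) \Rightarrow (5)$ of that theorem gives $M^{\perp_{d-2}} = \add(M)$. For the other equality, apply Theorem~\ref{mainGlobal} to $N = D(M) \in \MCM(R)$: from $R,\omega \in \add(M)$ one reads off $\omega, R \in \add(N)$, and the duality induces $\End_R N \cong A^{op}$, so $\End_R N$ is MCM of global dimension $d$. This yields $N^{\perp_{d-2}} = \add(N)$, which I transport into ${}^{\perp_{d-2}}M = \add(M)$ via the Ext-symmetry
\[
\Ext^i_R(X,M) \;\cong\; \Ext^i_R(D(M), D(X)) \qquad (X \in \MCM(R),\ i \geq 0).
\]
The symmetry itself is obtained by truncating a free resolution of $X$ at a stage where the syzygy becomes MCM, applying $D$ (exact on short exact sequences of MCMs since $\Ext^{1}_R(Y,\omega)=0$ for $Y \in \MCM$) to produce an $\omega$-coresolution of $D(X)$, and then comparing Ext computations.

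For $(2) \Rightarrow (1)$, the summand assertions are immediate: $R \in {}^{\perp_{d-2}}M = \add(M)$ since $\Ext^{\geq 1}_R(R,-) = 0$, and $\omega \in M^{\perp_{d-2}} = \add(M)$ since $\Ext^{\geq 1}_R(M,\omega) = 0$ for MCM $M$. To see that $A$ is MCM, observe that $M \in \add(M) = M^{\perp_{d-2}}$ forces $\Ext^i_R(M,M)=0$ for $1 \leq i \leq d-2$, so applying $\Hom_R(-,M)$ to a free resolution of $M$ yields an exact sequence
\[
0 \to A \to M^{n_0} \to M^{n_1} \to \cdots \to M^{n_{d-2}} \to K \to 0
\]
in which each $M^{n_i}$ is MCM and $K$ sits inside the MCM module $M^{n_{d-1}}$, hence has depth $\geq 1$. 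Iterating the depth lemma along the induced short exact sequences forces $\depth A \geq d$, so $A \in \MCM(R)$. Now Theorem~\ref{mainGlobal} applies, and its chain $(5) \Rightarrow (1) \Rightarrow (2) \Leftrightarrow (3)$ delivers $\gl A = d$.

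The step I expect to be most delicate is the Ext-symmetry identity used in the first direction: although it is a standard consequence of $\omega$-duality, one must check carefully that the truncated-coresolution argument computes $\Ext^i$ cleanly throughout $1 \leq i \leq d-2$, and that $D$ interchanges $\add(M)$ with $\add(N)$ on the nose so that $N^{\perp_{d-2}} = \add(N)$ can genuinely be read back as a statement about $M$. A secondary pitfall, easily avoided but essential, is in the depth chase for $A$ in the other direction: the conclusion $\depth A \geq d$ would fail were it not for the observation that the final cokernel $K$ embeds into the MCM module $M^{n_{d-1}}$.
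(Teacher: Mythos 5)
Your proof is correct and follows essentially the same strategy the paper uses: apply Theorem~\ref{mainGlobal} to $M$ and to its canonical dual $M^{\vee}$, and transport the resulting $\add$-description of $(M^{\vee})^{\perp_{d-2}}$ back to ${}^{\perp_{d-2}}M$ via the duality isomorphism $\Ext_R^i(X,M) \cong \Ext_R^i(M^{\vee},X^{\vee})$. The paper states the endomorphism-ring identification as $A \cong \End_R(M^{\vee})$ where you more carefully write $A^{op}$; by the remarks in Section~\ref{Gldim} ($\gl A = \gl A^{op}$, and $A \cong A^{op}$ as $R$-modules) the distinction is immaterial for the MCM and global-dimension conclusions. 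Two genuine (small) points of divergence: you verify $A \in \MCM(R)$ in the $(2)\Rightarrow(1)$ direction by an explicit depth chase along $\Hom_R(F_\bullet,M)$, whereas the paper leaves this implicit (it would instead invoke Lemma~\ref{useful}, since $\Ext_R^i(M,M)=0$ for $1 \le i \le d-2$ together with $M \in \MCM(R)$ forces $\Hom_R(M,M)$ to satisfy $(\Se_d)$); and you derive the $\Ext$-symmetry from an $\omega$-coresolution argument, while the paper gives a short induction on $i$ starting from the Yoneda description at $i=1$. Both routes are standard and sound; yours is, if anything, a bit more self-contained on the $A \in \MCM(R)$ step.
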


Iyama's Theorem gives a particularly nice understanding of non-commutative crepant resolutions when $d=3$. However, in higher dimension, the isolated singularity assumption is more restrictive and  condition (2) already rules out a big class of rings for applications. Namely, if  $R$ is a local complete intersection, and $M\in \MCM(R)$, then $\Ext_R^2(M,M)=0$ forces $M$ to be free, since one can complete and ``lift" $M$ to a regular local ring, see \cite{ADS}. With that in mind we shall seek minimal necessary and sufficient conditions for $\Hom_R(M,M)$ to have finite global dimensions when $M$ is a $\MCM$ module over a reasonable Cohen-Macaulay ring $R$. We provide such result in Theorem \ref{mainGlobal}, see also example \ref{BL}.  We can use our result to recover Theorem \ref{Iyama}, whose proof will appear near the end of this section.

We first record some useful results:
\begin{lem}(Lemma 2.3, \cite{Da3})\label{useful}
Let $R$ be a Cohen-Macaulay  ring. Let  $M,N$ be finitely generated $R$-modules and $n>1$ an integer. Consider the two conditions:
\begin{enumerate}
\item  $\Hom(M,N)$  is $(\Se_{n+1})$.
\item  $\Ext_R^i(M,N)=0$ for $1 \leq i \leq n-1$.
\end{enumerate}
If  $M$ is locally free in codimension $n$ and $N$ satisfies $(\Se_n)$, then (1) implies (2). If  $N$ satisfies $(\Se_{n+1})$, then (2) implies (1).
\end{lem}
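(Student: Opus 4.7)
The plan is to run both implications through the dualized complex of a free resolution $F_\bullet \to M$: the cochain complex $C^\bullet: N^{r_0} \xrightarrow{d_0} N^{r_1} \xrightarrow{d_1} N^{r_2} \to \cdots$ with $H^j(C^\bullet) = \Ext^j(M,N)$. Set $Z^j = \ker d_j$ and $B^j = \im d_{j-1}$, noting $Z^0 = \Hom(M,N)$ and $Z^j \cong \Hom(\Omega^j M, N)$. Two depth inputs are used throughout: (a) the \emph{kernel bound} $\depth Z^j \geq \min(2, \depth N)$ locally, coming from $0 \to Z^j \to N^{r_j} \to \im d_j \to 0$ together with $\im d_j \hookrightarrow N^{r_{j+1}}$; and (b) the equivalence of $\Ext^j(M, N) = 0$ with exactness of $0 \to Z^{j-1} \to N^{r_{j-1}} \to Z^j \to 0$.

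For $(2) \Rightarrow (1)$: Under the vanishing $\Ext^j(M, N) = 0$ for $1 \leq j \leq n-1$, we obtain short exact sequences $0 \to Z^j \to N^{r_j} \to Z^{j+1} \to 0$ for $j = 0, \ldots, n-2$. Localize at a prime $p$ and iterate the kernel form of the depth lemma $\depth Z^j \geq \min(\depth N, \depth Z^{j+1}+1)$ downward from $j = n-2$ to $j = 0$; combined with the kernel bound for $Z^{n-1}$ this yields
\[
\depth \Hom(M,N)_p \;\geq\; \min\!\bigl(\depth N_p,\ \min(2,\depth N_p) + n - 1\bigr).
\]
A brief case split on $\dim R_p \in \{0,1\}$ versus $\dim R_p \geq 2$, using $\depth N_p \geq \min(n+1, \dim R_p)$, then yields the desired $\depth \Hom(M,N)_p \geq \min(n+1, \dim R_p)$.

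For $(1) \Rightarrow (2)$: Suppose for contradiction that $\Ext^i(M,N) \neq 0$ for some $i \in [1, n-1]$; take $i$ minimal and pick $p \in \Ass(\Ext^i(M,N))$, so $\depth \Ext^i(M,N)_p = 0$. Since $M$ is locally free in codimension $n$, the support of $\Ext^i(M,N)$ lies in primes of height $\geq n+1$, so $\dim R_p \geq n+1$ and hence $\depth \Hom(M,N)_p \geq n+1$ and $\depth N_p \geq n$. Minimality of $i$ gives exact sequences $0 \to Z^j \to N^{r_j} \to Z^{j+1} \to 0$ for $j \leq i-2$; iterating the cokernel form of the depth lemma upward produces $\depth Z^{i-1}_p \geq n - i + 2$. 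Splitting the four-term exact sequence $0 \to Z^{i-1} \to N^{r_{i-1}} \to Z^i \to \Ext^i(M,N) \to 0$ into $0 \to Z^{i-1} \to N^{r_{i-1}} \to B^i \to 0$ and $0 \to B^i \to Z^i \to \Ext^i(M,N) \to 0$, one gets $\depth B^i_p \geq n - i + 1 \geq 2$ and, by the kernel bound, $\depth Z^i_p \geq 2$. The depth lemma then forces $\depth \Ext^i(M,N)_p \geq \min(\depth B^i_p - 1, \depth Z^i_p) \geq 1$, contradicting $\depth \Ext^i(M,N)_p = 0$.

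The main obstacle is purely numerical: one must track the cumulative depth loss across up to $n-1$ cokernel (or kernel) steps. The scheme succeeds because the one-unit surplus between $(\Se_{n+1})$ for $\Hom(M,N)$ and $(\Se_n)$ for $N$ exactly absorbs the per-step loss, while the kernel bound $\depth Z^j \geq 2$ supplies the final unit needed to push $\depth \Ext^i$ strictly above zero whenever $i \leq n-1$; this also clarifies why the hypothesis $n > 1$ is essential.
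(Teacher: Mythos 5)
Your argument is correct. The paper itself does not reproduce a proof of this lemma: it simply cites Lemma~2.3 of \cite{Da3} and remarks that the local statement there globalizes by localizing at maximal ideals, so there is no in-text argument to compare against. Your proof supplies the details via the standard depth chase along the dualized free resolution $\Hom(F_\bullet, N)$, which is the natural route and presumably the one used in \cite{Da3}.

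Each step checks out. The identification $Z^j \cong \Hom(\Omega^j M, N)$ and $Z^0 = \Hom(M,N)$ is right; the ``kernel bound'' $\depth Z^j_p \geq \min(2,\depth N_p)$ follows as you say from $0 \to Z^j \to N^{r_j} \to B^{j+1} \to 0$ with $B^{j+1}$ a submodule of a direct sum of copies of $N$, and the degenerate case $B^{j+1}=0$ only improves it. For $(2)\Rightarrow(1)$, iterating the depth lemma downward gives $\depth \Hom(M,N)_p \geq \min\bigl(\depth N_p,\ \min(2,\depth N_p)+n-1\bigr)$, and your case split on $\dim R_p$, combined with $(\Se_{n+1})$ for $N$, closes it; note in the $\dim R_p \geq 2$ case the key point is $\min(2,\depth N_p)=2$, making the second argument of the outer $\min$ equal to $n+1$. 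For $(1)\Rightarrow(2)$, choosing $p$ an associated prime of $\Ext^i(M,N)$ with $i$ minimal, using local freeness in codimension $n$ to force $\height p \geq n+1$, and then iterating the cokernel form of the depth lemma upward gives $\depth B^i_p \geq n-i+1 \geq 2$ (this is exactly where $i \leq n-1$ enters), while the kernel bound gives $\depth Z^i_p \geq 2$; the final application of the depth lemma to $0 \to B^i \to Z^i \to \Ext^i(M,N) \to 0$ yields the contradiction $\depth \Ext^i(M,N)_p \geq 1$. The only overreach is your closing remark that the scheme ``clarifies why $n>1$ is essential'': for $n=1$ condition (2) is vacuous and (1) is the standard fact that $\Hom(M,N)$ inherits $(\Se_2)$ from $N$, so the lemma does not actually fail at $n=1$, it is merely trivial there. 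That is an editorial quibble, not a gap.
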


\begin{proof}
The statement in \cite{Da3} assumes $R$ is local. Since one can localize at each maximal prime without affecting relevant issues, the conclusion is clear.
\end{proof}

\begin{rmk}\label{rmkHom}
Let $M$ be a finite $R$-module and $A=\Hom_R(M,M)$. It is well known that there is an equivalence between the categories of modules in $\add(M)$  and projective (right) modules over $A$ via $\Hom_R(M,-)$ (see for example Lemma 4.12 in \cite{BD} or \cite{Leu}). It follows that any finite right $A$-module $N$ fits into an exact sequence  $$  0 \to \Hom_R(M,N_1) \to \Hom_R(M,P_1) \to \Hom_R(M,P_0) \to N \to 0$$  such that $N_1 \to P_1 \to P_0$ is exact. In particular, if $\dim R\geq 2$ and $M\in \MCM(R)$: 
$$\gl(A) \leq  \sup \{\pd_A\Hom_R(M,N) | N\in \modu(R) \  \text{satisfying} (\Se_2) \} + 2.$$
\end{rmk}

\begin{construct}\label{cons}
The above discussion shows that when investigating projective resolutions of $A$-modules if suffices to consider modules of the
form $\Hom_R(M,N)$. If $R$ is a direct summand
of $M$, one can build a resolution in a particularly nice way. First pick a set of generators $f_1,\cdots,f_n$ of $\Hom_R(M,N)$ which includes a  set of generators of $\Hom_R(R,N)$. Let $\phi$ be the map $M^n \to N$ which takes $(m_1,\cdots,m_n)$ to $f_1(m_1) +\cdots + f_n(m_n)$. Clearly $\phi$ is surjective and $\Hom_R(M,\phi): A^{\oplus n} \to \Hom_R(M,N)$ is also surjective. In other words, one has the short exact sequences:

$$ 0 \to N_1 \to M^{\oplus n} \to N \to 0$$
and
$$0 \to \Hom_R(M,N_1) \to A^{\oplus n} \to \Hom_R(M,N) \to 0$$
Continuing in this fashion one could build an exact complex:  
$$\mathcal{F}:  \cdots \to M^{ n_{i+1}} \to M^{n_i} \to \cdots \to M^ {n_0} \to N \to 0$$
such that $\Hom_R(M,\mathcal{F})$ is an $A$- projective resolution of $\Hom_R(M,N)$.
\end{construct} 

We recall the following notations taken from \cite{I1}.
For a $\MCM$ module over a Cohen-Macaulay ring $R$ and an integer $n>0$, we  denote: 
$$ M^{\perp_n} =  \{X\in \MCM(R) | \Ext_R^i(M,X)=0 \ \text{for} \ 1\leq i \leq n\}$$
and 
$$ ^{\perp_n}M =  \{X\in \MCM(R) | \Ext_R^i(X,M)=0 \ \text{for} \ 1\leq i \leq n\}$$

\begin{thm}\label{mainGlobal}
Let $R$ be a   Cohen-Macaulay ring of dimension $d\geq 3$. Let $M \in \MCM(R)$ such that $M$ has a free summand  and $A=\Hom_R(M,M)$ is $\MCM$. Consider the following:  
\begin{enumerate}
\item  There exist an integer $n$ such that $1\leq n\leq d-2$ and  $ M^{\perp_n} = \add(M) $.
\item $\gl A\leq d$.
\item $\gl A=d$.
\item $\Ext_R^i(M,M)=0 \ \text{for} \ 1\leq i \leq d-2$ (i.e., $M\in  M^{\perp_{d-2}}$).
\item $ M^{\perp_{d-2}} = \add(M)$.  
\end{enumerate}

Then $(5)\Rightarrow (1) \Rightarrow (2) \Leftrightarrow (3)$. If in addition $R$ is locally Gorenstein on the non-maximal primes of $\spec R$ then $(3)+(4)  \Rightarrow (5)$. If $R$ is also locally regular on the non-maximal primes of $\spec R$ (i.e. $\spec R$ has isolated singularities) then $(3) \Rightarrow (5)$. 
\end{thm}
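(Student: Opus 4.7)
The plan is to handle the five implications roughly in the order given. The implication $(5)\Rightarrow(1)$ is immediate by taking $n=d-2$, which is allowed because $d\ge 3$. For the equivalence $(2)\Leftrightarrow(3)$ I would invoke the noncommutative Auslander--Buchsbaum formula (cf.\ Subsection~\ref{Gldim}): since $A$ is MCM of dimension $d$ over $R$, any finite $A$-module $Y$ with $\pd_A Y<\infty$ satisfies $\pd_A Y+\depth_R Y=d$. The direction $(3)\Rightarrow(2)$ is trivial, while for $(2)\Rightarrow(3)$ I would exhibit an $A$-module of $R$-depth zero: because $M$ has a free summand, $R$ sits as an $R$-direct summand of $A$, so $Y=A/\m A$ is nonzero and has $R$-depth zero, forcing $\pd_A Y=d$.

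For $(1)\Rightarrow(2)$, I would resolve an arbitrary $N$ satisfying $(\Se_2)$ via Construction~\ref{cons}, obtaining $\cdots\to M^{n_1}\to M^{n_0}\to N\to 0$ so that applying $\Hom_R(M,-)$ gives a projective $A$-resolution of $\Hom_R(M,N)$. The depth lemma applied to the short exact sequences $0\to N_{i+1}\to M^{n_i}\to N_i\to 0$ forces $\depth N_i\ge\min(d,i+2)$, so in particular $N_{d-2}$ is MCM. Hypothesis (1) places $M\in\add(M)\subset M^{\perp_n}$, hence $\Ext_R^j(M,M)=0$ for $1\le j\le n$; combined with the surjectivity of $\Hom_R(M,M^{n_i})\to\Hom_R(M,N_i)$ built into the construction, dimension shifting along the long Ext sequences yields $\Ext_R^j(M,N_i)=0$ for $1\le j\le\min(n,i)$. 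Taking $i=d-2\ge n$ places $N_{d-2}$ inside $M^{\perp_n}=\add(M)$, so the resolution terminates at length $d-2$; combined with Remark~\ref{rmkHom} this gives $\gl A\le d$.

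For $(3)+(4)\Rightarrow(5)$, the inclusion $\add(M)\subset M^{\perp_{d-2}}$ is immediate from (4), since every $Y\in\add(M)$ is a summand of some $M^r$ and inherits the Ext vanishing. For the reverse inclusion, let $X\in M^{\perp_{d-2}}$ and apply Lemma~\ref{useful} with $n=d-1$ and $N=X$: the Ext vanishing up to $d-2$ together with the fact that $X$ is MCM (hence satisfies $(\Se_d)$) yield that $\Hom_R(M,X)$ is $(\Se_d)$, i.e., MCM over $R$. Noncommutative Auslander--Buchsbaum then gives $\pd_A\Hom_R(M,X)=0$, so $\Hom_R(M,X)$ is a projective $A$-module; via the equivalence $\add(M)\simeq\mathrm{proj}(A)$ of Remark~\ref{rmkHom} this forces $X\in\add(M)$.

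To obtain $(3)\Rightarrow(5)$ under the isolated singularity hypothesis I would first derive (4) from (3), after which the preceding step applies (regular implies Gorenstein). Under isolated singularities every MCM module is locally free on the punctured spectrum, so in particular $M$ is locally free in codimension $d-1$. Applying Lemma~\ref{useful} in the forward direction with $n=d-1$ and $N=M$, using that $A=\Hom_R(M,M)$ is $(\Se_d)$ by hypothesis, yields $\Ext_R^i(M,M)=0$ for $1\le i\le d-2$, which is (4). The most delicate step is $(1)\Rightarrow(2)$, where one must arrange that the syzygy $N_{d-2}$ simultaneously becomes MCM and lands in $M^{\perp_n}$; this requires the free summand of $M$ so that Construction~\ref{cons} interacts correctly with the Ext vanishing inherited from $\add(M)\subset M^{\perp_n}$. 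Once this interaction is in hand, the remaining implications are relatively formal consequences of Lemma~\ref{useful}, Remark~\ref{rmkHom}, and Auslander--Buchsbaum for $A$.
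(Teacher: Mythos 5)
Your proofs of $(5)\Rightarrow(1)$, $(1)\Rightarrow(2)$, $(2)\Leftrightarrow(3)$, and $(3)\Rightarrow(4)$ (under isolated singularities) track the paper's argument closely and are correct; for $(1)\Rightarrow(2)$ in particular you correctly identify the induction $\Ext^j_R(M,N_i)=0$ for $1\le j\le\min(n,i)$ that the paper leaves implicit.

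Your proof of $(3)+(4)\Rightarrow(5)$ departs genuinely from the paper. The paper realizes $\Hom_R(M,X)$ as a $d$-th $A$-syzygy by applying $\Hom_R(M,-)$ to a chain of $d-2$ pushforward sequences (Proposition~\ref{pushforward}) followed by one more pushforward to get a second syzygy; the pushforward machinery is exactly where the ``locally Gorenstein on non-maximal primes'' hypothesis enters. You instead use Lemma~\ref{useful} to show $\Hom_R(M,X)$ is MCM and then invoke a noncommutative Auslander--Buchsbaum formula to conclude it is $A$-projective. This is cleaner and, notably, appears not to use the Gorenstein hypothesis at all, so if it is airtight it strengthens the theorem. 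But there is a gap in how you justify the formula. Subsection~\ref{Gldim} does not state any Auslander--Buchsbaum formula, so the citation is empty; more importantly, the formula as you phrase it --- ``since $A$ is MCM of dimension $d$ over $R$, any finite $A$-module $Y$ with $\pd_A Y<\infty$ satisfies $\pd_A Y+\depth_R Y=d$'' --- is \emph{false} in that generality: take $R$ regular local of dimension $d$ and $A$ the ring of upper-triangular $2\times 2$ matrices over $R$; then $A$ is $R$-free, $\gl A=d+1<\infty$, and the simple $A$-module $(R,0)$ is MCM over $R$ with $\pd_A=1$, violating the formula. The inequality $\pd_A Y+\depth_R Y\ge d$ always holds (minimal $A$-resolution plus the depth lemma, since $A$-projectives are MCM), but the reverse inequality requires the hypothesis $\gl A=d$ --- exactly condition (3) --- and a reduction: mod out a $Y$-regular sequence of length $\depth_R Y$, observe $\pd_A$ increases by $1$ at each step (a Nakayama/$\Ext$ long-exact-sequence argument), and invoke $\gl A=d$ as an upper bound in the depth-zero base case. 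You should make this dependence on $\gl A=d$ explicit and supply or correctly cite the argument; as written, the reader has no way to see that the step is legitimate. Once that is patched, you should also flag that your argument seems to eliminate the Gorenstein hypothesis from $(3)+(4)\Rightarrow(5)$, as that would be a noteworthy improvement over the theorem as stated and deserves scrutiny rather than silent absorption.
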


\begin{proof} $ (5)\Rightarrow (1) $ is trivial. 
$(1) \Rightarrow (2)$: By remark \ref{rmkHom} it is enough to prove $\pd_AB\leq d-2$ for $B=\Hom_R(M,N)$ for any $R$-module $N$ satisfying  $(\Se_2)$. We apply Construction \ref{cons} to get a long exact sequence:
$$\mathcal{F}:  0 \to N_{d-2} \to M^{n_{d-3}} \to \cdots \to M^ {n_0} \to N \to 0$$
such that $\Hom_R(M,\mathcal{F})$ is exact. Let $N_j$ be the kernel at the $j-1$ spot. From the construction  we get
$\Ext_R^i(M,N_{j})$ embeds in $\Ext^1(M,M)^{n_{j-1}} = 0$. Also, since $M\in M^{\perp_n}$ one can conclude from the long exact sequence for $\Ext$ at each short exact sequences of Construction \ref{cons} that $N_{d-2} \in  M^{\perp_n} = \add(M)$ by assumption. It  then follows that $\Hom_R(M,N_{d-2})$ is $A$-projective, and the desired conclusion follows. 

$(2) \Leftrightarrow  (3)$: By starting with the $A$-module $B=\Hom_R(M,R/\m)$  for some maximal ideal $\m$ and counting depth along a
(localized at $\m$) $A$-projective resolution of $B$, one can see that $\pd_AB\geq d$.  Thus if $\gl A\leq d$, it is equal to $d$. 

$(3)+(4) \Rightarrow (5)$, assuming $R$ is locally Gorenstein on the non-maximal primes: Let $N \in \MCM(R)$ such that  $\Ext_R^i(M,N)=0$ for $ 1\leq i \leq d-2$.  It suffices to show that $\Hom_R(M,N)$ is a projective $A$-module (the other inclusion is guaranteed by $(4)$. Since $R$ is  a summand of $M$ we can use the pushforward
(see Proposition \ref{pushforward}) to build  a short exact sequence below, with $N_1$ satisfying $\Se_{d-1}$:
$$ 0 \to N \to M^n \to N_1 \to 0 $$
Since $\Ext^1_R(M,N)=0$ one gets 
$$ 0 \to \Hom_R(M,N) \to A^n \to \Hom_R(M,N_1) \to 0 $$
It follows that $\Hom_R(M,N)$ is a first  $A$-syzygy of $\Hom_R(M,N_1)$. Notice that if $d>3$ then $\Ext_R^i(M,N_1)=0$ for $1\leq i \leq d-3$ because of $(4)$. Repeating the process if necessary until we  get a module $N_{d-2}$ which is $(\Se_2)$ such that    $\Hom_R(M,N)$ is a $(d-2)$th  $A$-syzygy of $\Hom_R(M,N_{d-2})$. We claim that $\Hom_R(M,N_{d-2})$ is a second $A$-syzygy. Since $N_{d-2}$ satisfies $(\Se_2)$ we can again use the pushforward to build an exact sequence 
\[ 0 \to N_{d-2} \to M^a  \xrightarrow{\alpha}  M^b. \]
Applying  $\Hom_R(M,-)$ we get an exact sequence of right $A$-modules:

$$ 0 \to \Hom_R(M,N_{d-2}) \to A^a  \to A^b \to X \to 0$$
here $X$ is the quotient of $A^b$ by the image of $\Hom(M,\alpha)$, which proves our claim. 

In summary, $\Hom_R(M,N)$  is a $d$th   $A$-syzygy. By assumption (3) $\Hom_R(M,N)$ must be $A$-projective, so $N\in \add(M)$, and that is what we need to prove.   

Finally, if $\spec R$ has isolated singularities, then since $A$ is $\MCM$ and Lemma \ref{useful} we have $(4)$ automatically, so the last assertion is clear. 

\end{proof}

\begin{cor}\label{BLVCor}
Let $R$ be a Cohen-Macaulay ring of  dimension $d\geq 3$. Let  $M\in \MCM(R)$ such that $M$ is locally free in codimension $2$, $M$ has a free summand and $A=\Hom_R(M,M)$ is also $\MCM$.  Consider the following:
\begin{enumerate}
\item $\{X\in \MCM(R) | \Ext_R^1(M,X)=0\} =  \add(M)  $
\item $\gl(A) = d$
\end{enumerate}
We have: $(1)$ implies $(2)$. If $d=3$, then $(2)$ implies $(1)$. 
\end{cor}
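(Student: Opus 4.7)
The implication $(1)\Rightarrow(2)$ is an immediate application of Theorem~\ref{mainGlobal}. Condition (1) is exactly the statement $M^{\perp_1}=\add(M)$, i.e.\ condition (1) of that theorem for $n=1$; this is admissible since $d\geq 3$ forces $1\leq d-2$, and the hypotheses on $M$ (free summand, $A$ maximal Cohen-Macaulay) match. Invoking $(1)\Rightarrow(2)\Leftrightarrow(3)$ of Theorem~\ref{mainGlobal} yields $\gl A=d$.

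For the converse we assume $d=3$ and $\gl A=3$, and show the nontrivial inclusion $M^{\perp_1}\subseteq\add(M)$. (The other inclusion $\add(M)\subseteq M^{\perp_1}$ is automatic: Lemma~\ref{useful} applied to $N=M$ with $n=2$ gives $\Ext_R^1(M,M)=0$, since $A$ is $(\Se_3)$ and $M$ is locally free in codimension $2$.) Fix $N\in\MCM(R)$ with $\Ext_R^1(M,N)=0$. The crucial point is that when $d=3$, $(\Se_3)$ coincides with MCM. Applying the converse direction of Lemma~\ref{useful} with $n=2$ (valid since the MCM module $N$ satisfies $(\Se_3)=(\Se_{n+1})$), the vanishing $\Ext_R^1(M,N)=0$ forces $B:=\Hom_R(M,N)$ to satisfy $(\Se_3)$, hence to be MCM over $R$. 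Next invoke the Auslander-Buchsbaum formula for the module-finite $R$-algebra $A$: for any finite right $A$-module $B$ with $\pd_A B<\infty$, $\pd_A B+\depth_R B=\depth_R A=d$. Since $\gl A=3<\infty$ gives $\pd_A B<\infty$ and the previous step gives $\depth_R B=3$, we conclude $\pd_A B=0$, so $\Hom_R(M,N)$ is a projective right $A$-module.

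It remains to recover $N\in\add(M)$ from the projectivity of $B$. Write $M=R\oplus M'$ and let $e\in A$ be the idempotent projecting $M$ onto its $R$-summand. For every $R$-module $X$, the decomposition $\Hom_R(M,X)=\Hom_R(R,X)\oplus\Hom_R(M',X)$ together with right multiplication by $e$ induces a canonical, functorial $R$-module isomorphism $\Hom_R(M,X)\cdot e\cong\Hom_R(R,X)\cong X$. By the equivalence $\add(M)\simeq\{\text{projective right }A\text{-modules}\}$ via $\Hom_R(M,-)$ (Remark~\ref{rmkHom}), the projective module $\Hom_R(M,N)$ is isomorphic to $\Hom_R(M,Y)$ for some $Y\in\add(M)$ as right $A$-modules. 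Applying $(-)\cdot e$ to this $A$-linear isomorphism yields an $R$-module isomorphism $Y\cong N$, so $N\in\add(M)$, as desired.

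The step most likely to require care is the Auslander-Buchsbaum formula for the module-finite algebra $A$ -- although standard, it is proved by induction on $\pd_A B$ via the depth lemma, using that projective $A$-modules are summands of $A^{\oplus k}$ and hence MCM over $R$. The restriction to $d=3$ enters only through the identity $(\Se_3)=\MCM$ in dimension three: this is what allows the single vanishing $\Ext_R^1(M,N)=0$ to force $\Hom_R(M,N)$ to be MCM, which is the input Auslander-Buchsbaum needs to yield projectivity. In higher dimensions one would instead need the stronger vanishing $\Ext_R^i(M,N)=0$ for $1\leq i\leq d-2$, which is exactly the analogous direction in Theorem~\ref{mainGlobal}.
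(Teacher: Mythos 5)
Your proof of $(1)\Rightarrow(2)$ is essentially identical to the paper's: verify condition (1) of Theorem~\ref{mainGlobal} with $n=1$ and invoke the implications $(1)\Rightarrow(2)\Leftrightarrow(3)$. For the converse $(2)\Rightarrow(1)$ when $d=3$, however, you take a genuinely different route. The paper's own proof is simply ``Lemma~\ref{useful} gives $\Ext^1_R(M,M)=0$; now apply Theorem~\ref{mainGlobal},'' i.e.\ it routes through $(3)+(4)\Rightarrow(5)$ of that theorem. The paper's proof of that implication uses the pushforward construction of Proposition~\ref{pushforward} to exhibit $\Hom_R(M,N)$ as a $d$-th $A$-syzygy, and the pushforward requires $R$ to be locally Gorenstein in codimension one --- a hypothesis that appears explicitly in Theorem~\ref{mainGlobal} but, notably, \emph{not} in the statement of Corollary~\ref{BLVCor}. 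Your argument avoids the pushforward entirely: when $d=3$ the condition $(\Se_3)$ coincides with maximal Cohen-Macaulayness, so the converse direction of Lemma~\ref{useful} (which needs no Gorenstein-type hypothesis on $R$, only that $N$ satisfies $(\Se_{n+1})$) shows directly that $\Hom_R(M,N)$ is MCM over $R$; the non-commutative Auslander-Buchsbaum equality for the module-finite $R$-algebra $A$ then forces $\pd_A\Hom_R(M,N)=0$. Your step recovering $N\in\add(M)$ via the idempotent $e$ is also correct and makes explicit a point the paper leaves implicit. What your route buys: it dodges the Gorenstein-in-codimension-one hypothesis and is arguably more transparent when $d=3$. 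What it costs: it leans on the Auslander-Buchsbaum equality for module-finite Noetherian algebras as a black box --- a known fact in this circle of literature but one the paper neither proves nor cites. Your parenthetical sketch (``induction via the depth lemma'') really only establishes the inequality $\pd_A B\geq\depth_R A-\depth_R B$ (depth-chasing along a projective resolution of MCM modules); the reverse inequality, which is what you actually need (MCM plus finite projective dimension implies projective), is the substantive half and requires a Nakayama-type argument using minimality of the resolution and the inclusion $\m A\subseteq\operatorname{rad}(A)$. You should either supply that argument or cite a reference for the equality.
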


\begin{cor}\label{omega}
Let $R$ be a   Cohen-Macaulay ring of dimension $d\geq 3$ with isolated singularities and suppose $R$ has a canonical module $\omega_R$. Let $M \in \MCM(R)$ such that $M$ has a free summand  and $A=\Hom_R(M,M)$ is $\MCM$. If $\gl(A)\leq d$ then $\omega_R\in \add(M)$.  
\end{cor}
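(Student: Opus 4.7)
The plan is to invoke Theorem \ref{mainGlobal} and then verify that the canonical module lies in $M^{\perp_{d-2}}$. First, the equivalence $(2)\Leftrightarrow(3)$ in Theorem \ref{mainGlobal} promotes the hypothesis $\gl A\leq d$ to $\gl A=d$, so condition $(3)$ of that theorem is satisfied. The isolated singularity hypothesis activates the final implication of Theorem \ref{mainGlobal}, namely $(3)\Rightarrow(5)$, which yields
\[ M^{\perp_{d-2}} = \add(M). \]
It therefore suffices to show $\omega_R\in M^{\perp_{d-2}}$.

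By definition $\omega_R$ is maximal Cohen-Macaulay, so the only thing to verify is the Ext vanishing $\Ext^i_R(M,\omega_R)=0$ for $1\leq i\leq d-2$. This can be checked after localizing at an arbitrary maximal ideal $\m$ of $R$. At such a prime, $M_\m\in \MCM(R_\m)$ with $\depth M_\m=d$, and $(\omega_R)_\m\cong \omega_{R_\m}$. Local duality then identifies $\Ext^i_{R_\m}(M_\m,\omega_{R_\m})$ with the Matlis dual of the local cohomology module $H^{d-i}_\m(M_\m)$, which vanishes for every $i\geq 1$ by the depth computation. Consequently $\Ext^i_R(M,\omega_R)=0$ for all $i\geq 1$, and in particular $\omega_R\in M^{\perp_{d-2}}=\add(M)$.

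No serious obstacle arises: the corollary reduces to an application of Theorem \ref{mainGlobal}, with the only extra input being the standard vanishing of $\Ext^i_R(\MCM,\omega_R)$ for $i\geq 1$, which is immediate from local duality.
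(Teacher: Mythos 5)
Your argument is correct and follows the same path as the paper's proof: reduce to showing $\omega_R\in M^{\perp_{d-2}}$, which holds because $\Ext^i_R(M,\omega_R)=0$ for all $i>0$ when $M$ is maximal Cohen-Macaulay, and then invoke Theorem~\ref{mainGlobal}. The paper states this more tersely, but your extra details (spelling out $(2)\Leftrightarrow(3)\Rightarrow(5)$ and justifying the Ext vanishing via local duality) are exactly the standard unwinding of the same argument.
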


\begin{proof}
Since $\Ext_R^i(M,\omega_R)=0$ for $i>0$, the conclusion follows directly from Theorem \ref{mainGlobal}. 
\end{proof}

\begin{proof}
Proposition \ref{useful} shows that $\Ext_R^1(M,M)=0$. Thus all the assertions follows from Theorem \ref{mainGlobal}. 
\end{proof}

In the following examples we shall investigate the existence of NCCRs (see Definition \ref{NCCR}) for the only known examples of non-Gorenstein local rings of finite $\MCM$ type. The references needed for these can be found in \cite[16.10,16.12]{Yo} or \cite[Examples 11,12]{Leu}. The rings in both examples have dimension $3$ and isolated singularities. 

\begin{eg}
Let $R=k[[x^2,y^2,z^2,xy,yz,zx]]$. It is known that  $R$ is of finite Cohen-Macaulay type and the indecomposable elements of $\MCM(R)$ up to isomorphisms are $R$, the canonical module $\omega = (x^2,xy,xz)$ and $N=\syz_1^R(\omega)$.  Let $M=R\oplus \omega$. Then $\Hom_R(M,M) = M\oplus M$ (because the class group of $R$ is $\ZZ_2$ and generated by $\omega$) is $\MCM$. Since $N=\syz_1^R(\omega)$ there is a non-split exact sequence:
 $$ \ses{N}{F}{\omega}  $$
which shows that $\Ext_R^1(\omega, N) \neq 0$. This shows that $A=\Hom_R(M,M)$ is an NCCR for $R$. 
\end{eg}

\begin{eg}
Let $R=k[[x,y,z,u,v]]/(xz-y^2,xv-yu,yv-zu)$. Then the indecomposable $\MCM$ modules  up to isomorphisms are $R$, the canonical module $\omega = (u,v)$, $N= \syz_1^R(\omega) = (x,y,u)$, $N'= \syz_2^R(\omega)$ and $L=N^{\vee}$. We shall show that there is no NCCR of the form  $A=\Hom_R(M,M)$ such that $R\in \add(M)$. Suppose, by contradiction, that such a module $M$ exists. By Corollary \ref{omega}  we must have $\omega \in \add(M)$. Also, by Lemma \ref{useful}  $\Ext_R^1(M,M)=0$. One can easily check that $\Ext_R^1(\omega,N), \Ext_R^1(N',R)$ and $\Ext_R^1(L,R)$ are not $0$, so $M$ must be of the form $R^a\oplus \omega^b$ for $a,b\geq 1$. But then one can check that  $\Ext_R^1(M,L)=0$, so by Theorem \ref{mainGlobal} $L\in \add(M)$, contradiction. 
\end{eg}

\begin{eg}\label{BL}
Very recently, Buchweitz, Leuschke and Van den Bergh construct in \cite{BLV} non-commutative crepant resolutions using $\MCM$ modules over  hypersurfaces of the form $R=k[X]/\det(X)$ where $X=(x_{ij})$ is an $n\times n$ matrix of indeterminates for $n\geq 2$. Such rings are regular in codimension $2$, but do not have  isolated singularities unless $n=2$. It would be very interesting to see if one can check condition $(1)$ of Corollary \ref{BLVCor} for the modules given in  \cite{BLV}.  
\end{eg}

When $R$ is Gorenstein and is regular on the non-maximal primes, Theorem \ref{mainGlobal} gives a particularly clean result:

\begin{cor}
Let $R$ be a  Gorenstein  ring with isolated singularities and suppose that $\dim R=d>2$. Let $M \in \MCM(R)$ such that $M$ has a free summand  and $A=\Hom_R(M,M)$ is $MCM$. The following are  equivalent:  
\begin{enumerate}
\item $\add(M) = \{X\in \MCM(R) | \Ext_R^i(M,X)=0 \ \text{for} \ 1\leq i \leq d-2\}$.
\item $\gl A <\infty $. 
\item $\gl A=d$. 

\end{enumerate}

\end{cor}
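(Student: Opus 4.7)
The plan is to treat the corollary as a clean restatement of Theorem \ref{mainGlobal} in the Gorenstein isolated-singularity case, supplemented by a short depth-counting step that bridges ``$\gl A$ is finite'' and ``$\gl A \leq d$''. Condition $(1)$ of the corollary is precisely condition $(5)$ of Theorem \ref{mainGlobal}, and condition $(3)$ of the corollary is precisely condition $(3)$ of the theorem. Since $R$ is Gorenstein with isolated singularities, $R$ is locally regular (and hence locally Gorenstein) at every non-maximal prime, so the reinforced implication $(3) \Rightarrow (5)$ of Theorem \ref{mainGlobal} is available.

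With these correspondences in hand, $(1) \Rightarrow (3)$ will follow from the chain $(5) \Rightarrow (1) \Rightarrow (2) \Leftrightarrow (3)$ of the theorem, $(3) \Rightarrow (1)$ from the theorem's implication $(3) \Rightarrow (5)$, and $(3) \Rightarrow (2)$ is trivial. This leaves $(2) \Rightarrow (3)$ as the only step not directly furnished by Theorem \ref{mainGlobal}, because condition $(2)$ of the theorem is the sharper statement $\gl A \leq d$ rather than merely $\gl A < \infty$.

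For $(2) \Rightarrow (3)$ I would invoke the standard noncommutative Auslander-Buchsbaum inequality. Let $N$ be a finitely generated $A$-module with $\pd_A N = n < \infty$ and take a finite $A$-projective resolution $0 \to P_n \to \cdots \to P_0 \to N \to 0$. Each $P_i$ is an $R$-summand of a free $A$-module, and since $A$ is $\MCM$ over $R$ the free $A$-modules are $\MCM$, so $\depth_R P_i = d$ for every $i$. Iterating the depth lemma along the resolution gives $\depth_R N \geq d - n$, and $\depth_R N \geq 0$ then forces $n \leq d$. Hence $\gl A < \infty$ implies $\gl A \leq d$. Combining this with the lower bound $\pd_A \Hom_R(M, R/\m) \geq d$ already established inside the theorem's $(2) \Leftrightarrow (3)$ step (obtained by the same depth count on the $A$-module $B = \Hom_R(M, R/\m)$, which has $R$-depth zero), one concludes $\gl A = d$.

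The only substantive obstacle is this $(2) \Rightarrow (3)$ upgrade, and it is mild: the $\MCM$ hypothesis on $A$ over $R$, already present in the corollary's assumptions, is exactly what converts the theorem's one-sided depth estimate into two-sided control of $\gl A$.
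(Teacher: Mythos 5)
The high-level plan is right: you correctly match conditions (1) and (3) of the corollary with conditions (5) and (3) of Theorem \ref{mainGlobal}, you correctly note that the hypotheses supply the ``isolated singularities'' clause needed for $(3)\Rightarrow(5)$, and you correctly isolate $(2)\Rightarrow(3)$ as the only implication the theorem does not directly furnish. But the argument you give for $(2)\Rightarrow(3)$ is logically broken. The iterated depth lemma applied to a finite $A$-projective resolution $0\to P_n\to\cdots\to P_0\to N\to 0$, with every $P_i$ maximal Cohen--Macaulay over $R$, yields
\[
\depth_R N \;\geq\; d - n,
\]
which is a \emph{lower} bound on $\depth_R N$ and equivalently a \emph{lower} bound on $n=\pd_A N$ (namely $n \geq d - \depth_R N$). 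Combining it with $\depth_R N\geq 0$ tells you nothing: if $n>d$ the inequality $\depth_R N\geq d-n$ is vacuous. There is no way to extract $n\leq d$ from this; you have re-derived the lower bound $\gl A \geq d$, not the upper bound. A ``noncommutative Auslander--Buchsbaum \emph{equality}'' that would give $n = d - \depth_R N$ does not hold for a general module-finite $R$-algebra, so this route genuinely fails.

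What the step $(2)\Rightarrow(3)$ actually needs is a bound on the self-injective dimension of $A$, and the paper supplies exactly this in Proposition \ref{VB}: for $R$ a Gorenstein normal domain of dimension $d$ and $A=\Hom_R(M,M)$ maximal Cohen--Macaulay, the spectral sequence $\Ext_A^p(B,\Ext_R^q(A,-))\Rightarrow\Ext_R^{p+q}(B,-)$ together with $\Ext_R^q(A,R)=0$ for $q>0$ and Auslander's isomorphism $A^*\cong A$ shows $\Ext_A^p(B,A)\cong\Ext_R^p(B,R)=0$ for $p>d$; hence any $A$-module of finite projective dimension has projective dimension $\leq d$, and combined with the depth lower bound one gets $\gl A = d$ whenever $\gl A<\infty$. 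Your corollary's hypotheses put you in this situation: a local Cohen--Macaulay ring with isolated singularities in dimension $d\geq 3$ is regular in codimension one and $(\Se_2)$, hence normal, and being local it is a normal domain, so Proposition \ref{VB} applies verbatim. Replacing your depth-counting paragraph with a citation of Proposition \ref{VB} (after this brief normality remark) repairs the proof; the rest of your reductions to Theorem \ref{mainGlobal} and Lemma \ref{useful} are fine and track the paper's intent.
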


\begin{proof}
This result is just a combination of \ref{useful} and \ref{mainGlobal}.
\end{proof}

The following lemma is probably well-known, but we cannot locate a suitable reference. As we will need it to recover Iyama's result \ref{Iyama} from main Theorem \ref{mainGlobal}, we sketch a proof. 

\begin{lem}
Let $R$ be a Cohen-Macaulay  ring with a canonical module $\omega$.  For an $R$-module $M$ let  $M^{\vee} = \Hom_R(M,\omega)$. Then for any modules $M,N \in \MCM(R)$ and all $i\geq 0$ we have:
$$ \Ext_R^i(M,N)  \cong \Ext_R^i(N^{\vee},M^{\vee}).$$

\end{lem}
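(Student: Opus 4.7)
The plan is to use the fact that $\omega$ is a dualizing module for $\MCM(R)$, in particular that $(-)^\vee = \Hom_R(-,\omega)$ is an exact duality on $\MCM(R)$ satisfying $N \cong N^{\vee\vee}$, and that $\Ext_R^j(X,\omega) = 0$ for all $j > 0$ whenever $X \in \MCM(R)$. With these ingredients the identification of the two Ext modules drops out of a resolution argument together with a single adjunction on the level of free modules.

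First I would take a free resolution $\cdots \to P_1 \to P_0 \to M \to 0$. Since $M$ is $\MCM$, the depth lemma shows that every syzygy $\Omega^iM$ is again in $\MCM(R)$. Consequently $\Ext_R^j(\Omega^iM,\omega) = 0$ for all $j > 0$ and all $i \ge 0$, and dualizing the short exact sequences $0 \to \Omega^{i+1}M \to P_i \to \Omega^iM \to 0$ and splicing them together yields an exact complex
\[
0 \to M^\vee \to P_0^\vee \to P_1^\vee \to P_2^\vee \to \cdots
\]
in which each $P_i^\vee$ is a finite direct sum of copies of $\omega$.

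Next I would produce a natural chain-level identification $\Hom_R(P_\bullet,N) \cong \Hom_R(N^\vee,P_\bullet^\vee)$. For a finitely generated free module $F$ with dual $F^* = \Hom_R(F,R)$, one has
\[
\Hom_R(F,N) \cong F^* \otimes_R N \cong F^* \otimes_R N^{\vee\vee} \cong \Hom_R(N^\vee, F^*\otimes_R \omega) \cong \Hom_R(N^\vee, F^\vee),
\]
using biduality $N \cong N^{\vee\vee}$ (valid because $N \in \MCM(R)$). Applying this to each $P_i$ gives an isomorphism of complexes that is natural in the differentials.

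Finally, I would compute cohomology on both sides. The left-hand complex yields $\Ext_R^i(M,N)$ by definition. On the right-hand side, the complex $0 \to M^\vee \to P_0^\vee \to P_1^\vee \to \cdots$ is a right resolution of $M^\vee$ by modules that are $\Hom_R(N^\vee,-)$-acyclic, because $N^\vee \in \MCM(R)$ forces $\Ext_R^j(N^\vee,\omega) = 0$ for $j > 0$ and each $P_i^\vee$ is a direct sum of copies of $\omega$. By the standard acyclic-resolution principle this computes $\Ext_R^i(N^\vee,M^\vee)$, giving the claimed isomorphism. The only substantive point to check carefully is the acyclicity in this last step, which I expect to be the main (small) obstacle; everything else is formal once one has $M \in \MCM(R)$ and the standard facts about $\omega$.
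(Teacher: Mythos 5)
Your argument is correct, and it takes a genuinely different route from the paper's, even though the underlying ingredients are identical (biduality $M\cong M^{\vee\vee}$, that $(-)^\vee$ preserves $\MCM$, and that $\Ext^j_R(X,\omega)=0$ for $j>0$ when $X\in\MCM(R)$). The paper proceeds by induction on $i$: the case $i=0$ is biduality, the case $i=1$ is handled directly via the Yoneda description (dualizing a short exact sequence of $\MCM$ modules gives another one, since $\Ext^1_R(N,\omega)=0$), and for $i>1$ it dimension-shifts through a syzygy $M_1$ of $M$, applying $\Hom_R(N^\vee,-)$ to $0\to M^\vee\to F^\vee\to M_1^\vee\to 0$ and using that $\Ext^j_R(N^\vee,F^\vee)=0$ because $F^\vee\in\add(\omega)$. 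Your proof packages all of these shifts into one step: you dualize the entire free resolution to get a coresolution of $M^\vee$ by objects in $\add(\omega)$, identify the two Hom-complexes termwise, and invoke the acyclic-resolution theorem to read off $\Ext^i_R(N^\vee,M^\vee)$. What your route buys is uniformity across all degrees (in particular it absorbs the $i=1$ base case automatically); what it costs is the appeal to the acyclic-resolution machinery rather than bare long exact sequences. The naturality point you flagged as the remaining obstacle is harmless: the composite isomorphism $\Hom_R(P_i,N)\cong\Hom_R(N^\vee,P_i^\vee)$ is just $\phi\mapsto\Hom_R(\phi,\omega)$, which visibly commutes with the differentials since $\Hom_R(-,\omega)$ is a contravariant functor.
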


\begin{proof}
We shall use induction on $i$. When $i=0$ the needed isomorphism is given by $f \mapsto \Hom(f, \omega)$ (we utilize the isomorphism $M \cong M^{\vee\vee}$). For $i=1$ one can use the Yoneda definition to construct an isomorphism taking an element

$$ \ses NLM$$ 
of $\Ext_R^1(M,N)$ to 
$$ \ses {M^{\vee}}{L^{\vee}}{N^{\vee}} .$$
For $i>1$ let $M_1$ be a first syzygy of $M$ which is also $\MCM$. By induction hypothesis
$$\Ext_R^i(M,N) \cong \Ext_R^{i-1}(M_1,N) \cong \Ext_R^{i-1}(N^{\vee},M_1^{\vee}) $$
Applying $\Hom_R(N,-)$ to the exact sequence:
$$ \ses {M^{\vee}}{F^{\vee}}{M_1^{\vee}}$$
gives $\Ext_R^{i-1}(N^{\vee},M_1^{\vee}) \cong \Ext_R^{i}(N^{\vee},M^{\vee}) $.
\end{proof}

We can now recover Iyama's Theorem: 

\begin{proof}(of Theorem \ref{Iyama}):

Assume $(1)$. Then by \ref{useful} we know that $\Ext_R^i(M,M)=0 \ \text{for} \ 1\leq i \leq d-2$. Now the first equality of $(2)$ follows by Theorem \ref{mainGlobal}. Note that $A \cong \Hom_R(M^{\vee},M^{\vee})$, hence $M^{\vee}$ also satisfies all conditions of $(1)$. So by the first equality of $(2)$, which we already proved, we have:
$$\add(M^{\vee})= \{Y\in \MCM(R) | \Ext_R^i(M^{\vee},Y)=0 \ \text{for} \ 1\leq i \leq d-2\} .$$

But note that $\Ext_R^i(M^{\vee},Y) \cong \Ext_R^i(Y^{\vee},M)$ we obtain:
$$\add(M^{\vee})= \{Y\in \MCM(R) | \Ext_R^i(Y^{\vee},M)=0 \ \text{for} \ 1\leq i \leq d-2\} .$$ 
Let $X=Y^{\vee}$ we get the second inequality of part $(2)$. 

Now assume $(2)$. Then obviously $R,\omega \in \add(M)$. The fact that $\gl A=d$ follows from Theorem \ref{mainGlobal}.

\end{proof}

We note that when $R$ is a Gorenstein normal domain, the condition $(2)$ and $(3)$ of Theorem \ref{mainGlobal} are equivalent. That result is due to Van den Bergh in \cite[4.2]{V1}, we provide here more details of the proof for the convenience of the readers. 

\begin{prop}(Van den Bergh)\label{VB}
Let $R$ be a Gorenstein normal domain of dimension $d$.  Let $M \in \MCM(R)$ such that $A = \Hom_R(M,M)$ is also $\MCM$. If  $\gl(A) <\infty$ then $\gl(A) =  d$. 
\end{prop}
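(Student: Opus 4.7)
The plan is to reduce to the local case and invoke a non-commutative Auslander-Buchsbaum formula. Since $R$ is a Noetherian normal domain of dimension $d$, some maximal ideal $\mathfrak{m}$ has height $d$, and localizing at $\mathfrak{m}$ preserves the hypotheses (MCM-ness of $A$, finiteness of $\gl A$, Gorensteinness of $R$) while not increasing global dimension; so it suffices to prove $\gl A = d$ when $(R, \mathfrak{m})$ is local Gorenstein of dimension $d$. In this case $A$ is a semilocal Noetherian ring because $\mathfrak{m} A$ is contained in the Jacobson radical of $A$, and every finitely generated $A$-module admits a minimal projective resolution.

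The central ingredient is the Auslander-Buchsbaum equality: for any finitely generated $A$-module $B$ with $\pd_A B < \infty$,
\[ \pd_A B + \depth_R B \;=\; \depth_R A \;=\; d. \]
The inequality $\pd_A B \geq d - \depth_R B$ is the depth-counting argument already used in the proof of Theorem~\ref{mainGlobal}: the terms of an $A$-projective resolution of $B$ are $R$-summands of copies of $A$, hence MCM $R$-modules of depth $d$, and iterating the depth lemma along the short exact sequences $\ses{\Omega^{i+1}_A B}{P_i}{\Omega^i_A B}$ yields the bound. The reverse inequality exploits minimality: the differentials map into $J(A) \cdot P_{i-1}$, which forces the depth to grow by exactly one at each syzygy step until projectivity is reached. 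This non-commutative Auslander-Buchsbaum statement is classical in the theory of orders and can be extracted from Ramras's work or from \cite{McRo}.

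Granted the formula, the proposition follows at once. Since $\gl A < \infty$, every finitely generated $A$-module has finite projective dimension, so $\pd_A B = d - \depth_R B \leq d$ and $\gl A \leq d$. Conversely, any simple $A$-module $S$ is annihilated by $\mathfrak{m}$ and hence has $\depth_R S = 0$, so the formula gives $\pd_A S = d$ and $\gl A \geq d$. The substantive part is the sharp equality rather than the inequality direction; minimality and the semilocal structure of $A$ are what make it work, with left-versus-right subtleties harmless here thanks to $\gl A = \gl A^{\mathrm{op}}$ (Section~\ref{Gldim}).
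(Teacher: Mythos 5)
Your proof is correct in outline but takes a genuinely different route than the paper. For the upper bound $\gl A \leq d$, the paper uses a change-of-rings spectral sequence that degenerates (since $A$ is MCM) to an isomorphism $\Ext_A^p(B, A^*) \cong \Ext_R^p(B,R)$, and then applies Auslander's duality $\Hom_R(A,R) \cong A$ --- the step where the Gorenstein normal domain hypothesis actually does its work --- to conclude $\Ext_A^p(B,A) = 0$ for $p > d$; the bound $\pd_A B \leq d$ then follows from the elementary fact that $\Ext_A^n(B,A) \neq 0$ whenever $\pd_A B = n < \infty$, which holds over any Noetherian ring. That argument is global and needs neither localization nor minimal resolutions. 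You instead quote a non-commutative Auslander--Buchsbaum formula, which is a legitimate device and in some ways more general (it needs only $R$ local Cohen--Macaulay and $A$ MCM, with no appeal to normality or to being a domain), but it enters your proof as a black box: the heuristic you offer for its hard direction (``depth grows by exactly one at each syzygy step'') is not itself a proof, and the standard proof of the formula requires minimal projective $A$-resolutions, hence semiperfectness of $A$, which a module-finite algebra over a non-complete local ring need not have --- semilocal is not enough, and you should pass to the $\m$-adic completion (this is harmless since completion is faithfully flat and preserves depth and projective dimension). You should also note that the reduction to the local case only handles the lower bound as stated; for the upper bound one must check $\pd_{A_{\m'}} B_{\m'} \leq \dim R_{\m'} \leq d$ at \emph{every} maximal ideal $\m'$, not merely at one of maximal height. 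Both proofs obtain $\gl A \geq d$ in the same way, by depth-counting along an $A$-projective resolution of $\Hom_R(M,R/\m)$, which uses only the depth lemma and no minimality.
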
  

\begin{proof}
There is a well known spectral sequence for change of rings:
$$ E^{p,q}_2  = \Ext_A^p(B, \Ext^q_R(A,C)) \Rightarrow_p \Ext^n_R(B,C) $$
for any (left) $A$-module $B$ and $R$-module $C$. Let $C=R$, since $A \in \MCM(R)$ we know that
$\Ext^q_R(A,R)=0$ for $q>0$. So one has an isomorphism $\Ext_A^p(B, A^*) \cong \Ext^p_R(B,R) $ for $p>0$. By \cite[Lemma 5.4]{Aus1} $A^* \cong A$ as $A$-modules. So we have $\Ext_A^p(B, A) \cong \Ext^p_R(B,R) $ for  any $A$-module $B$ and any $p>0$. Therefore $\Ext_A^p(B, A)=0$ for $p>d$. Thus if $\pd_AB<\infty$, it is at most $d$. By starting with the $A$-module $B=\Hom_R(M,R/\m)$  for some maximal ideal $\m$ and count depth along a (localized at $m$) $A$-projective resolution of $B$, it is clear that $\pd_AB \geq d$. We can now conclude that $\gl A =d$.  

\end{proof}

\section{Existence of cluster-tilting objects over dimension 1 reduced hypersurfaces}\label{dim1}

The purpose of this section is to give a pure algebraic proof of Theorem \ref{BIKR} which works even over algebraically closed fields of positive characteristic not equal to $2$. 

We recall the following definition for the reader's convenience:

\begin{dfn}\label{defRep}
Let $\mathcal C$ be either $\MCM(R)$  or $\underline{\MCM}(R)$ and $M \in \mathcal C$. We call $M \in \mathcal C$: 
\begin{itemize}
\item \textit{rigid} if $\Ext_R^1(M,M)=0$.
\item \textit{cluster tilting} if 
$$\add(M) = \{X\in \mathcal C | \Ext_R^1(X,M)=0\} = \{X\in \mathcal C | \Ext_R^1(M,X)=0\} $$
\end{itemize}
\end{dfn}

Let $(S,\m)$ be a  complete regular local ring of dimension $2$. Let $R=S/(f)$ be a reduced hypersurface. Assume $f= f_1\cdots f_n$ is the factorization of $f$ into prime elements. For a subset $I \subset \{1,2,\cdots,n \}$ let $f_I= \prod_{i\in I} f_i$ and $S_I=S/(f_I)$ .

The rest of this section is devoted to extending and at the same time giving a direct proof of Theorem \ref{BIKR}. We aim to prove the following result, which will be achieved by combining Theorems \ref{mainHyper1} and \ref{mainHyper2}:
\begin{thm}
Suppose that $S=k[[x,y]]$ where $k$ is an algebraically closed field of characteristic not $2$. Then the following are equivalent:
\begin{enumerate}
\item For every $i$, $f_i\notin \m^2$. 
\item $\MCM(R)$ admits a cluster tilting object. 

\end{enumerate}
\end{thm}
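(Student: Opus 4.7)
The plan is to prove the equivalence as the combination of Theorems \ref{mainHyper1} (for $(1)\Rightarrow(2)$) and \ref{mainHyper2} (for $(2)\Rightarrow(1)$). The central idea is to pass via Kn\"orrer periodicity (Theorem \ref{Kno}) from the $1$-dimensional hypersurface $R$ to the $3$-dimensional hypersurface $R^\sharp = S[[u,v]]/(f+uv)$, where Theorem \ref{mainGlobal} becomes directly applicable with $d=3$ (so $d-2=1$). Note that $R^\sharp$ automatically has isolated singularity in our setting, since the singular locus of $V(f) \subset \spec S$ in a complete local ring is at most $0$-dimensional, hence confined to the origin --- so the stronger form $(3)\Rightarrow(5)$ of Theorem \ref{mainGlobal} will be available on the $R^\sharp$ side.

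For $(1)\Rightarrow(2)$, I would exhibit as cluster tilting object the module indexed by a maximal chain of subsets of $[n]$:
\[
T \;=\; R \;\oplus\; \bigoplus_{k=1}^{n-1} S/(f_1 f_2 \cdots f_k).
\]
Each summand is a reduced complete intersection of dimension one, hence MCM over $R$. Rigidity is immediate from the periodic minimal resolution
\[
\cdots \xrightarrow{g} R \xrightarrow{f/g} R \xrightarrow{g} R \to R/(g) \to 0
\]
of each cyclic summand $R/(g)$: a short computation in the UFD $S$ yields
\[
\Ext^1_R\bigl(R/(f_I),\,R/(f_J)\bigr) \;\cong\; S/(f_{I\setminus J},\,f_{J\setminus I}),
\]
which vanishes whenever $I$ and $J$ are comparable, and in particular for every pair in our chain, so $\Ext^1_R(T,T)=0$. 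The substantive part is the equality $\add(T)=T^{\perp_1}$. For this I would lift to $T^\sharp := F(T)\oplus R^\sharp \in \MCM(R^\sharp)$, and verify by direct calculation (this is where smoothness of each $f_i$ enters --- the transverse/simple intersection pattern of branches keeps the $\Hom$ spaces between the cyclic summands MCM after the Kn\"orrer lift) that $A^\sharp := \End_{R^\sharp}(T^\sharp)$ is MCM and $T^\sharp$ is rigid with a free summand. Then the implication $(5)\Leftarrow(3)\Leftrightarrow(2)$ of Theorem \ref{mainGlobal} (applicable because $R^\sharp$ has isolated singularity) gives $(T^\sharp)^{\perp_1} = \add(T^\sharp)$ over $R^\sharp$, and Proposition 2.5 (compatibility of $F$ with syzygies and $\Ext^i$) transfers this back to $T^{\perp_1} = \add(T)$ over $R$.

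For $(2)\Rightarrow(1)$ I would argue by contradiction. If $T$ is cluster tilting over $R$ but some branch $f_k$ lies in $\m^2$, then after adjoining $R$ as a summand (which the cluster tilting property forces anyway) Kn\"orrer lifting turns $T^\sharp := F(T)\oplus R^\sharp$ into a module satisfying condition $(5)$ of Theorem \ref{mainGlobal} over $R^\sharp$, so $A^\sharp := \End_{R^\sharp}(T^\sharp)$ is a non-commutative crepant resolution of $R^\sharp$. The plan is to derive a contradiction by producing infinitely many pairwise non-isomorphic indecomposable MCM $R$-modules that are $\Ext^1$-orthogonal to $T$ on both sides. Such a family arises because the irreducible singular hypersurface $S/(f_k)$ has infinite Cohen-Macaulay representation type precisely when $f_k \in \m^2$: one-parameter families of matrix factorizations of $f_k$ produce uncountably many indecomposable MCM $R$-modules supported on the singular branch. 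If these can be arranged to be rigid with respect to $T$, they would all have to belong to $\add(T)$, contradicting finiteness of the indecomposable summands of $T$.

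The principal obstacle is the $(2)\Rightarrow(1)$ direction: constructing the infinite family of witness modules that remain $\Ext^1$-orthogonal to the entire fixed cluster tilting module $T$, using only homological tools rather than the Auslander--Reiten and birational-geometry inputs used in \cite{BIKR}. The plan is to exploit the rigidity of $T^\sharp$ over $R^\sharp$ (imposed by Theorem \ref{mainGlobal}) to control $\Ext^1_{R^\sharp}(T^\sharp,-)$ on lifts of the matrix-factorization family, and then read off via Proposition 2.5 that orthogonality persists to a family in $\MCM(R)$ too large to fit inside any $\add(T)$.
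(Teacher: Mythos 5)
The decomposition is right in outline but the labels are swapped: in the paper, Theorem~\ref{mainHyper2} is $(1)\Rightarrow(2)$ and Theorem~\ref{mainHyper1} is $(2)\Rightarrow(1)$. That is minor. The substantive problems are different in the two directions.

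For $(1)\Rightarrow(2)$ your plan is circular. You propose to verify that $A^\sharp=\End_{R^\sharp}(T^\sharp)$ is MCM, that $T^\sharp$ is rigid, and that $T^\sharp$ has a free summand, and then to invoke ``$(5)\Leftarrow(3)\Leftrightarrow(2)$'' of Theorem~\ref{mainGlobal}. But to use $(3)\Rightarrow(5)$ you must first know that $\gl A^\sharp=3$, and in Theorem~\ref{mainGlobal} the only way in to condition $(3)$ is through $(1)$ or $(5)$ --- i.e.\ through $(T^\sharp)^{\perp_1}=\add(T^\sharp)$, which is exactly the statement you are trying to establish. Showing that $A^\sharp$ is MCM and $T^\sharp$ is rigid is merely setting up the standing hypotheses of Theorem~\ref{mainGlobal}, not verifying any of its numbered conditions. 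The paper avoids this trap entirely: Theorem~\ref{mainHyper2} is proved by a direct hands-on argument inside $\MCM(R)$ (split off a free submodule $N=(a_1,\dots,a_l)M$, show $g$ is a nonzerodivisor on $M/N$, reduce to $S/(h)$ by induction), with no Kn\"orrer lift and no appeal to Theorem~\ref{mainGlobal}. Indeed the paper stresses that this direction works for an arbitrary two-dimensional regular local $S$, not just $k[[x,y]]$, precisely because Kn\"orrer periodicity is not used.

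For $(2)\Rightarrow(1)$ you outline a contradiction via ``infinitely many pairwise non-isomorphic indecomposables orthogonal to $T$,'' but you have no mechanism to produce the required orthogonality, and you acknowledge this is the principal obstacle. The missing ingredients are precisely Proposition~\ref{rigid} and Corollary~\ref{subsets}. Proposition~\ref{rigid} (via Kn\"orrer lifting to $R^\sharp$, specializing to an $A_n$-singularity, and a rank argument) shows that every indecomposable rigid object is some $S_I$; combined with Corollary~\ref{subsets}(3) this forces the basic cluster tilting object to be $S^\omega$. Once the cluster tilting object is pinned down as $S^\omega$, you do not need an infinite family: a \emph{single} non-free $N\in\MCM(S/(f_n))$ (which exists because $f_n\in\m^2$ makes $S/(f_n)$ singular) does the job. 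Take $M=\syz_1^R(N)$; Corollary~\ref{subsets}(4) gives $\Ext^1_R(S_i,M)=0$ for all summands $S_i$, forcing $M\in\add(S^\omega)$, which forces $N$ to be $S/(f_n)$-free --- contradiction. Your plan also re-routes unnecessarily through an NCCR of $R^\sharp$, which is not used to derive the contradiction and adds an unverified hypothesis ($A^\sharp$ MCM).

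In short, the two genuine gaps are: a circularity in triggering Theorem~\ref{mainGlobal} for $(1)\Rightarrow(2)$, and the absence of any concrete orthogonality mechanism (the role played by Proposition~\ref{rigid} and Corollary~\ref{subsets}) for $(2)\Rightarrow(1)$. Also be careful with the rigidity computation for $(1)\Rightarrow(2)$: $\Ext^1_R(R/(f_I),R/(f_J))\cong S/(f_{I\setminus J},f_{J\setminus I})$ does \emph{not} vanish when $I,J$ are comparable and the symmetric difference is nonempty --- e.g.\ for $I\subsetneq J$ it is $S/(1,f_{J\setminus I})=S/(f_{J\setminus I})\neq 0$; the correct statement (Corollary~\ref{subsets}(3)) is obtained via Lemma~\ref{vanish}, not this formula.
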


Our approach to this theorem will enable us to prove the same result over any algebraically closed field of characteristic
not equal to $2$. The proof of the above theorem in \cite{BIKR} is quite ingenious, but complex. It uses many techniques
from Auslander-Reiten theory for Artin algebras, as well as subtle connections between crepant resolutions and
NCCRs (see Definition \ref{NCCR}), by using Kn\"orrer periodicity to lift to a three dimensional hypersurface singularity. We also use Kn\"orrer
periodicity to study the ranks of indecomposable modules over $R$. However, we are able to give, for example, a self-contained
proof that the condition  $f_i\notin \m^2$ implies $\MCM(R)$ admits a cluster tilting object, which holds for $S$ any
two-dimensional regular local ring (in particular, $S$ does not even have to contain a field for this direction).
We hope our approach will lead to new insight into similar problems.
We begin by characterizing the indecomposable rigid objects.

\smallskip

\begin{prop}\label{rigid}
Let $(S,\m)$ be a  complete regular local ring of dimension $2$ such that $S/\m$ is algebraically closed of
characteristic not equal to $2$. Let $R=S/(f)$ be a reduced hypersurface, and  assume that $f= f_1\cdots f_n$ is a
factorization of $f$ into prime elements.
Any indecomposable  rigid object in $\MCM(R)$ or $\underline{\MCM}(R)$ is of the form $S_I$ with $I\subset \{1,2,\cdots,n \}$.
\end{prop}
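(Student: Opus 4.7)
My plan is to combine a support-reduction argument with Kn\"orrer periodicity, lifting the problem to a $3$-dimensional Gorenstein isolated singularity where a rank analysis will conclude the proof.

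First I would reduce to the case where $M$ has full support on every minimal prime of $R$. Setting $J = \{i : M_{\p_i} \neq 0\}$ where the $\p_i$ are the minimal primes of $R$, the embedding $M \hookrightarrow \bigoplus_{i \in J} M_{\p_i}$ (valid since $M$ is torsion-free) together with the observation that $f_i$ acts by zero and each $f_j$ with $j\neq i$ by a unit on $M_{\p_i}$ forces $f_J := \prod_{i\in J} f_i$ to annihilate $M$. Hence $M$ is an MCM module over $R/f_J R = S/(f_J) = S_J$ with full support on the minimal primes of $S_J$; replacing $R$ by $S_J$, I may assume $M$ has full support over $R$, and the task reduces to showing $M \cong R = S_{\{1,\ldots,n\}}$.

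In the full-support case, I would apply Kn\"orrer periodicity. The element $f+uv$ is irreducible in $S[[u,v]]$ (being linear in $u$ over $S[[v]]$ with coprime coefficients $f, v$), so $R^\sharp := S[[u,v]]/(f+uv)$ is a $3$-dimensional complete local Gorenstein normal domain whose singular locus is the maximal ideal (by the Jacobian criterion). If $(\varphi, \psi)$ is a size-$d$ matrix factorization of $f$ with $M = \coker \varphi$, then Theorem \ref{Kno} and the subsequent proposition yield that $F(M) \in \underline{\MCM}(R^\sharp)$ is indecomposable and rigid. A block-determinant computation (using that the blocks $uI_d, -vI_d, \varphi, \psi$ pairwise commute) gives $\det\bigl(\begin{smallmatrix} uI_d & \psi \\ \varphi & -vI_d \end{smallmatrix}\bigr) = \pm(f+uv)^d$, and Smith-normal-form analysis at the DVR $S[[u,v]]_{(f+uv)}$ shows $F(M)$ has $R^\sharp$-rank equal to $d$.

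The main obstacle is to show that any indecomposable non-free rigid MCM module over $R^\sharp$ has rank $1$, that is, $d = 1$. My intended route uses the $2$-Calabi-Yau structure on $\underline{\MCM}(R^\sharp)$ coming from Serre duality for $3$-dim Gorenstein isolated singularities ($\Ext^1_{R^\sharp}(X,Y)$ is canonically dual to $\Ext^1_{R^\sharp}(Y,X)$), together with the reflexivity of MCM modules over a Gorenstein normal domain. Since $R^\sharp$ is a Kn\"orrer suspension of $R$, its divisor class group admits an explicit description in terms of the components of $f$, and the plan is to argue that any indecomposable rigid reflexive $R^\sharp$-module must be divisorial and therefore of rank $1$. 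Granting this, $d = 1$, the matrix factorization of $M$ is $(g, f/g)$ for some divisor $g \mid f$ in $S$, and up to units $g = f_I$ for some $I \subseteq \{1,\ldots,n\}$, yielding $M \cong S/(f_I) = S_I$, as required.
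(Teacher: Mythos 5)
Your proposal follows the paper's broad strategy — lift via Kn\"orrer periodicity to $R^\sharp = S[[u,v]]/(f+uv)$ and show the lifted module has rank one — but it leaves the crucial rank-one step as a gap, and I do not think the route you sketch for it can be made to work.

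Your reduction to the full-support case, the block-determinant computation $\det\bigl(\begin{smallmatrix} uI_d & \psi \\ \varphi & -vI_d \end{smallmatrix}\bigr) = \pm(f+uv)^d$, and the Smith normal form argument that $F(M)$ has $R^\sharp$-rank equal to the size $d$ of a reduced matrix factorization are all fine, and in fact they neatly replace the paper's explicit class-group analysis of rank-one $\MCM$ modules over $R^\sharp$. The problem is the sentence ``the plan is to argue that any indecomposable rigid reflexive $R^\sharp$-module must be divisorial and therefore of rank $1$. Granting this\dots''. This is precisely the content of the proposition in disguise, and the ingredients you list — the $2$-Calabi-Yau duality $\Ext^1(X,Y)\cong D\Ext^1(Y,X)$ and reflexivity over a Gorenstein normal domain — do not force rank one. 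They are properties shared by every $3$-dimensional Gorenstein normal isolated singularity, and for such rings indecomposable rigid $\MCM$ modules of rank bigger than one certainly exist in general (e.g.\ for nonabelian $SL(3)$ quotient singularities the $\MCM$ modules attached to higher-dimensional irreducible representations are rigid of higher rank; those rings are not hypersurfaces, but they illustrate that $2$-CY plus reflexivity alone cannot yield your claim). So there is genuinely something to prove here that uses the specific shape $uv+f$.

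The paper fills this gap by a different and quite concrete device: choose a sufficiently general $t$ (using that $k$ is infinite) so that $R^\sharp/(t)$ is a $2$-dimensional $A_m$ singularity $k[[x,y,z]]/(x^2+y^2+z^m)$, where all indecomposable $\MCM$ modules have rank one; then, since $\Ext^1_{R^\sharp}(M,M)=0$, one has $\Hom_{R^\sharp}(M,M)/t\Hom_{R^\sharp}(M,M)\cong\Hom_{R^\sharp/(t)}(M/tM,M/tM)$, and if $M$ had rank $\ge 2$ then $M/tM$ would decompose, giving an idempotent that lifts along the complete ring $R^\sharp$ and contradicts indecomposability of $M$. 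You would need to supply an argument of comparable strength (either this hyperplane-section/idempotent-lifting one, or some other input genuinely special to Kn\"orrer suspensions of reduced plane curves) to close the gap; the abstract $2$-CY/divisor-class-group framing does not by itself do the job.
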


\begin{proof}
We can assume we are in $\underline{\MCM}(R)$. Let $R'= S[[u,v]]/(uv+f)$. By Kn\"{o}rrer's periodicity result (\ref{Kno})
we have $\underline{\MCM}(R)$ and $\underline{\MCM}(R')$ are equivalent. We claim  that any indecomposable  rigid object in $\underline{\MCM}(R')$ has rank $1$. Since the residue field is infinite we can find $t \in R'$ such that $R_1=R'/(t) \cong k[[x,y,z]]/(x^2+y^2+z^n)$, an $A_n$ type simple singularity. Let $M$ represent an indecomposable  rigid object in $\MCM(R')$. Then $M/tM$ is a MCM module over $R_1$. Suppose $\rk M>1$, then so is $\rk M/tM$ as a module over $R_1$. 

It is well known that in $\MCM(R_1)$, all indecomposable objects have rank $1$
(one can prove this assertion by noting that all indecomposable modules over $k[[z]]/(z^n)$ are of the form $k[[z]]/(z^i)$ for $1\leq i\leq n$,
and then apply Kn\"{o}rrer's periodicity to describe all indecomposable $\MCM$ modules over $R_1$). 
As  $M/tM$ has rank bigger than $1$, it has to be decomposable. In other words, $\Hom_{R_1}(M/tM,M/tM)$ has idempotents.
Since $\Ext^1_{R'}(M,M)=0$ we obtain $\Hom_{R_1}(M/tM,M/tM) \cong \Hom_{R'}(M,M)/t\Hom_{R'}(M,M)$. As $R'$ is complete,
one can lift idempotents to $ \Hom_{R'}(M,M)$, contradicting the assumption that $M$ is indecomposable. So $M$ has to have rank $1$,
as claimed. 

All the rank $1$ MCM modules over $R'$  represent elements of the class group of $R'$, which is easy to
understand by inverting $u$ and using well-known sequences describing the relationship between the
class group of $R'$ and the class group of $R'[\frac{1}{u}]$. In particular, any rank $1$ MCM over $R'$ is isomorphic to one of the
ideals $M =(u,f_I)$ (they are elements of the class group of $R'$).  Using Kn\"{o}rrer's periodicity Theorem \ref{Kno}
which induces a bijection between the set of indecomposable rigid objects  of  $\underline{\MCM}(R)$ and $\underline{\MCM}(R')$,
our assertion is now clear.  \end{proof}

\begin{lem}\label{vanish}
Let $R$ be a local, reduced hypersurface of dimension $1$ and $M,N \in \MCM(R)$. Then the following are equivalent:
\begin{enumerate}
\item $\Ext^1_R(M,N)=0$.
\item $\Ext^1_R(N,M)=0$. 
\item $M\tensor N^* \in \MCM(R)$. 
\item $M^*\tensor N \in \MCM(R)$. 
\item $\Tor_2^R(M^*,N)=0$.
\item $\Tor_2^R(M,N^*)=0$.

\end{enumerate}
\end{lem}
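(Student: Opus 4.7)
The plan is to cluster the six conditions into $\{(1),(4),(5)\}$ and $\{(2),(3),(6)\}$ by a matrix-factorization computation, and then bridge the two clusters by Serre / Auslander--Reiten duality. First we may assume $R$ is complete (all six conditions behave well under $\m$-adic completion for MCM modules), so write $R=S/(f)$ with $(S,\m_S)$ a complete $2$-dimensional regular local ring. Since $R$ is reduced of dimension one, every non-maximal prime is a minimal prime and $R_{\p}$ is a field there, so $R$ has an isolated singularity; consequently $\Ext^i_R(X,Y)$ and $\Tor_i^R(X,Y)$ are of finite length (in particular, torsion) for all MCM $X,Y$ and $i\geq 1$.

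For MCM $M$ with matrix factorization $(\varphi,\psi)$ of size $n$, the identity $\im\bar\psi^T=\ker\bar\varphi^T$ gives $M^*=\Hom_R(M,R)=\ker\bar\varphi^T=\im\bar\psi^T\cong\coker\bar\varphi^T$, so $M^*$ itself lies in $\MCM(R)$ with matrix factorization $(\varphi^T,\psi^T)$. Writing out the $2$-periodic $R$-free resolutions of $M$ and $M^*$ and comparing kernels modulo images, the single subquotient $\ker\bar\psi^T/\im\bar\varphi^T$ of $N^n$ simultaneously computes $\Ext^1_R(M,N)$ and $\Tor_2^R(M^*,N)$; thus $\Ext^1_R(M,N)\cong\Tor_2^R(M^*,N)$, yielding $(1)\Leftrightarrow(5)$, and swapping $M$ and $N$ gives $(2)\Leftrightarrow(6)$. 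Next, breaking the $4$-term sequence $0\to M^*\to R^n\xrightarrow{\bar\varphi^T}R^n\to\coker\bar\varphi^T\to 0$ into two short exact sequences and tensoring with $N$ produces the Auslander--Bridger sequence
\[
0\to\Tor_2^R(M^*,N)\to M^*\otimes_R N\to\Hom_R(M,N)\to\Tor_1^R(M^*,N)\to 0.
\]
Because $\Hom_R(M,N)$ is MCM (torsion-free) and $\Tor_2^R(M^*,N)$ is torsion, the torsion submodule of $M^*\otimes_R N$ is precisely $\Tor_2^R(M^*,N)$. Hence $M^*\otimes_R N\in\MCM(R)$ iff $\Tor_2^R(M^*,N)=0$, i.e.\ $(4)\Leftrightarrow(5)$, and symmetrically $(3)\Leftrightarrow(6)$.

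To close the loop $(1)\Leftrightarrow(2)$ I invoke Serre duality for the stable category $\underline{\MCM}(R)$: since $R$ is a complete $1$-dimensional Gorenstein local ring with isolated singularity, $\underline{\MCM}(R)$ is $\Hom$-finite and its Serre functor is the identity shift (the general formula $\mathbb{S}=[\dim R-1]$ specialized to $\dim R=1$). Writing $\Ext^1_R(M,N)\cong\underline{\Hom}_R(\syz M,N)$ and applying the Serre-duality isomorphism $\underline{\Hom}_R(X,Y)\cong D\,\underline{\Hom}_R(Y,X)$ (with $D$ Matlis duality) produces $\Ext^1_R(M,N)\cong D\,\Ext^1_R(N,M)$; as both sides are of finite length, one vanishes iff the other does. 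Equivalently, this can be read off Tate-cohomology $2$-periodicity combined with the Tate-duality isomorphism $\widehat{\Ext}^i(M,N)\cong D\,\widehat{\Ext}^{-i}(N,M)$ at $i=1$.

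The main obstacle is the parity bookkeeping in the matrix-factorization identification: an off-by-one in the MF conventions would match $\Ext^1_R(M,N)$ with $\Tor_1^R(M^*,N)$ instead of $\Tor_2^R(M^*,N)$, breaking the comparison with $M^*\otimes_R N$ in the Auslander--Bridger step. The Serre-duality step is the conceptually deepest ingredient but is a standard tool for $1$-dim isolated-singularity Gorenstein rings.
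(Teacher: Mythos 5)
Your proposal is correct, but it takes a genuinely different route from the paper. The paper's proof cites Theorem~5.9 of Huneke--Jorgensen for the key equivalence $(1)\Leftrightarrow(3)$ (and, by symmetry in the roles of $M$ and $N$, $(2)\Leftrightarrow(4)$); this citation already builds in the swap between $M$ and $N$, so no separate duality input is needed. It then uses the four-term exact sequence of Hartshorne/Jothilingam/Jorgensen, together with $\Ext^1_R(M,R)=0$ and $M^1\cong\syz_1(M^*)$ for MCM modules over a hypersurface, to get $(1)\Leftrightarrow(5)$, and closes the loop with a short finite-length argument: tensoring the periodic sequence $0\to A\to F\to A_1\to 0$ with $B$ shows $\Tor_2^R(A,B)=0$ iff $A\otimes_R B$ is MCM, giving $(5)\Leftrightarrow(4)$ and $(6)\Leftrightarrow(3)$. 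You instead prove $(1)\Leftrightarrow(5)$ by a direct matrix-factorization computation (your parity is right: with $M=\coker\bar\varphi$ one has $M^*\cong\coker\bar\varphi^T$, and the two-periodic complexes identify $\Ext^1_R(M,N)$ and $\Tor_2^R(M^*,N)$ as the same subquotient of $N^n$), prove $(4)\Leftrightarrow(5)$ via the Auslander--Bridger sequence applied to $M^*$ (note this implicitly uses $\mathrm{Tr}(M^*)\cong M$, which over a hypersurface follows from $\mathrm{Tr}\cong(-)^*$ on MCM modules and is worth stating), and then bridge the two clusters with Serre/Tate duality on $\underline{\MCM}(R)$. What this buys you is a more self-contained, computation-driven derivation of the Ext--Tor comparison; what it costs is reliance on $(d-1)$-Calabi--Yau duality for the stable category of a Gorenstein isolated singularity, a piece of Auslander--Reiten machinery that the paper explicitly avoids (and avoids here precisely by leaning on the tensor conditions $(3)$ and $(4)$, which are manifestly symmetric under $M\leftrightarrow N$). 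Both arguments are valid; the paper's is closer to the stated goal of a direct homological proof, while yours is perhaps more transparent about where the hypersurface structure enters.
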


\begin{proof}
The equivalence of $(1)$ and $(3)$ is Theorem 5.9 of \cite{HJ}. It also implies $(2) \Leftrightarrow (4)$.
There is an exact sequence (see \cite{Ha}, \cite{Jo} or \cite{Jor}):
$$ \Tor_2^R(M^1,N) \to \Ext^1_R(M,R)\tensor_R N \to \Ext^1_R(M,N) \to \Tor_1^R(M^1,N) \to 0 $$
Here $M^1$ is the cokernel of $F_1^* \to F_2^*$, where $\cdots \to F_2 \to F_1\to F_0 \to M \to 0$ is a minimal resolution of $M$.  Since $M$ is $\MCM$ and $R$ is a hypersurface, we know that $\Ext_1^R(M,R)=0$ and $M^1$ is 
isomorphic to the first syzygy of $M^*$. It follows that $(1) \Leftrightarrow (5)$.

To finish the proof we claim that for $A,B \in \MCM(R)$, $\Tor_2^R(A,B)=0$ if and only if $A\tensor_RB \in \MCM(R)$. 
This will show that $(5) \Leftrightarrow (4)$ and $(6) \Leftrightarrow (3)$. Let $A_1$ the first syzygy of $A$. Then by periodicity, $A$ is a first syzygy of $A_1$. Tensoring the exact sequence $ 0 \to A \to F \to A_1 \to 0$ with $B$ 
we get 
$$ 0 \to \Tor_1^R( A_1,B)  \to A\tensor_RB \to F\tensor_RB \to A_1\tensor_RB \to 0 $$
Since $\Tor_1^R( A_1,B) = \Tor_2^R(A,B)$ has finite length ($R$ is reduced) one concludes that $\depth A\tensor_RB=1$ if and only if $  \Tor_2^R(A,B)=0$ as required.

\end{proof}

\begin{cor}\label{subsets} Let $(S,\m)$ be a  complete regular local ring of dimension $2$. Let $R=S/(f)$ be a reduced hypersurface. Assume $f= f_1\cdots f_n$ is a factorization of $f$ into prime elements.
Let $I,J\subset \{1,2,\cdots,n \}$. 
\begin{enumerate}
\item $S_I^* \cong S_I$.
\item The first syzygy of $S_I$ is $S_{L}$ with $L$ the complement of $I$. 
\item  $\Ext^1_R(S_I,S_J)=0$ if and only if $I\subset J$ or  $J\subset I$.
\item Let $I,J$ be disjoint subsets of $\{1,2,\cdots,n \}$. Let $N\in \MCM(S_I)$ and $M$ be the first $R$-syzygy of $N$.
Then $\Ext^1_R(M, S_J)=0$. 
\item Let $M,N \in \MCM(S_I)$. We have $\Ext^1_R(M,N) = 0$ if and only if $\Ext^1_{S_I}(M,N)=0$.   
\end{enumerate}
\end{cor}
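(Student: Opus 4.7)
The plan is to handle the five parts using the matrix factorization description of each $S_I$ together with Lemma \ref{vanish}. For (1) and (2), the key observation is that $S_I$ admits the obvious $S$-matrix factorization given by the scalar pair $(f_I, f_L)$, where $L = \{1,\ldots,n\} \setminus I$. Part (2) is immediate since the swapped matrix factorization $(f_L, f_I)$ presents $S_L$. For (1), rewriting $S_I = R/(f_I R)$ gives
\[
S_I^* = \Hom_R\bigl(R/(f_I),\,R\bigr) = \Ann_R(f_I) = (f_L)R \cong R/(f_I R) = S_I,
\]
where the final isomorphism uses that $S$ is a UFD, so $\Ann_R(f_L) = (f_I R)$.

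Part (3) will be the main obstacle. By Lemma \ref{vanish} combined with (1), the vanishing $\Ext^1_R(S_I, S_J) = 0$ is equivalent to $S_I \otimes_R S_J \cong S/(f_I, f_J)$ lying in $\MCM(R)$. If $I \subset J$ or $J \subset I$, then $(f_I, f_J)$ is principal in $S$, and the quotient is a one-dimensional Cohen--Macaulay quotient of $S$, hence MCM. For the converse, I will suppose both $I \setminus J$ and $J \setminus I$ are nonempty, set $K = I \cap J$, and factor $(f_I, f_J) = f_K \cdot (f_{I \setminus J}, f_{J \setminus I})$ in $S$. Since $f_{I \setminus J}$ and $f_{J \setminus I}$ are coprime non-units in the two-dimensional regular local ring $S$, the ideal $(f_{I\setminus J}, f_{J\setminus I})$ is $\m$-primary. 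Picking $x$ in the nonzero socle of $S/(f_{I\setminus J}, f_{J\setminus I})$, I will check that $f_K x$ represents a nonzero socle element of $S/(f_I, f_J)$, so $\depth S/(f_I, f_J) = 0$ and the module is not MCM.

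For (4), I will take an $S$-matrix factorization $(\phi, \psi)$ of $N$ with $\phi\psi = \psi\phi = f_I \cdot I_k$; then $(\phi, f_L \psi)$ is an $R$-matrix factorization of $f$, so $M = \syz_1^R(N) \cong \coker(f_L \psi : R^k \to R^k)$. The crucial point is that $I \cap J = \emptyset$ forces $J \subset L$, so $f_L \in (f_J)$ and $f_L \psi$ induces the zero map on $S_J^k$; hence $M \otimes_R S_J \cong S_J^k \in \MCM(R)$, and part (1) together with Lemma \ref{vanish} yields $\Ext^1_R(M, S_J) = 0$.

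Finally, for (5), since $N$ is annihilated by $f_I$, every $R$-linear map $N \to R$ lands in $\Ann_R(f_I) = (f_L)R \cong S_I$, so $N^* = \Hom_R(N, R) \cong \Hom_{S_I}(N, S_I) = N^\vee$. For any $S_I$-modules $A, B$ we have $A \otimes_R B = A \otimes_{S_I} B$, and an $S_I$-module is MCM over $R$ iff it is MCM over $S_I$ (depth is insensitive to the choice of base). Since $S_I$ is itself a one-dimensional reduced hypersurface over $S$, Lemma \ref{vanish} applies over $S_I$ as well, and chaining the equivalences gives
\[
\Ext^1_R(M, N) = 0 \iff M \otimes_{S_I} N^\vee \in \MCM(S_I) \iff \Ext^1_{S_I}(M, N) = 0.
\]
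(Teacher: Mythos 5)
Your proof is correct. Parts (1)--(3) essentially match what the paper leaves as an ``easy computation'': you make explicit the identification $S_I^*\cong (f_L)R\cong S_I$, read off the syzygy from the scalar matrix factorization $(f_I,f_L)$, and for (3) reduce to checking $\depth S/(f_I,f_J)$, supplying a clean socle argument for the converse that the paper omits. For (4) and (5), however, you take genuinely different routes. The paper proves (4) by shifting in $\Tor$: it replaces $\Tor_2^R(M,S_J)$ by $\Tor_2^R(N,S_{J^c})$ via syzygies of both arguments and then notes $N\otimes S_{J^c}=N$ since $f_{J^c}$ kills $N$; you instead write down the $R$-matrix factorization $(\phi, f_{I^c}\psi)$ of $f$ coming from the $S_I$-factorization of $N$ and observe directly that the presentation of $M=\coker(\overline{f_{I^c}\psi})$ becomes trivial after tensoring with $S_J$, yielding $M\otimes_R S_J\cong S_J^k$. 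For (5), the paper uses the Yoneda description: given an extension $0\to N\to Q\to M\to 0$ over $R$, it argues that $f_I^2Q=0$ and the unmixedness of $\Ann_S Q$ forces $f_IQ=0$, so the extension already lives over $S_I$; you instead transport the vanishing question through the tensor-product criterion of Lemma \ref{vanish}, checking that $N^*\cong N^\vee$, that $\otimes_R$ agrees with $\otimes_{S_I}$ on $S_I$-modules, and that depth is insensitive to the quotient $R\twoheadrightarrow S_I$, then apply Lemma \ref{vanish} over $S_I$ (itself a one-dimensional reduced hypersurface). Your version of (5) has the advantage of staying entirely in the homological framework of Lemma \ref{vanish} rather than invoking the Yoneda interpretation and an unmixedness argument; the paper's (4) is a bit slicker in that it avoids choosing a matrix factorization of $N$.
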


\begin{proof}
Part (1) and (2) are easy computations. 

Part (3): It is easy to see that $S_I^* \cong S_I$ and $S_I\tensor S_J \in \MCM(R)$ if and only if $I\subset J$ or  $J\subset I$. Then we can use Lemma \ref{vanish}. 

Part (4): By Lemma \ref{vanish} we need to show that $\Tor^R_2(M,S_J)=0$. Let $L$ be the complement of $J$. Then $\Tor^R_2(M,S_J)=\Tor^R_2(N,S_L)$. Again by \ref{vanish} and part (1) this is equivalent to $N/f_LN \in \MCM(R)$. But 
$I\subset L$, so $f_L \in \Ann(N)$, thus $N/f_LN=N$. 

Part (5): We use Yoneda definition for $\Ext$. Since any exact sequence in $\MCM(S_I)$ is also an exact sequence in $\MCM(R)$, one direction is easy. Suppose $\Ext^1_{(S_I)}(M,N)=0$ and $0 \to N \to Q \to M \to 0$ represents an element in  $\Ext^1_R(M,N)$. It is enough to show that $Q \in \MCM(S_I)$. It is easy to see that $(f_I)^2Q =0$. Since $Q$ is a Cohen-Macaulay module over $S$, its annihilator in $S$ must be an unmixed ideal of height $1$. It follows that $f_IQ=0$, so the exact sequence must splits, and we are done.

\end{proof}

Before moving on we recall some notations from \cite{BIKR}. We call an object in $\MCM(R)$ \textit{basic} if any indecomposable direct summand occurs  only once. For a permutation
$\omega$ of the set $\{1,\cdots,n\}$, let $S^{\omega}_i=S/(\prod_{j=1}^i f_{\omega(j)})$ and $S^{\omega}=\oplus_{i=1}^n S^{\omega}_i$. 

\begin{thm}\label{mainHyper1} 
Let $(S,\m)$ be a  equicharacteristic complete regular local ring of dimension $2$ such that $S/\m$ is algebraically closed of
characteristic not equal to $2$. Let $R=S/(f)$ be a reduced hypersurface, and  assume that $f= f_1\cdots f_n$ is a
factorization of $f$ into prime elements.
If $\MCM(R)$ admits a basic cluster tilting object then it has to be of the form $S^{\omega}$ and furthermore $f_i \notin \m^2$ for any $i$.
\end{thm}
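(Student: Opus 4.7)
The plan has two stages: first extract the chain structure $M = S^\omega$, then derive $f_i \notin \m^2$ for every $i$ by induction on $n$.

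For stage one, since $M$ is basic cluster tilting each indecomposable summand is rigid, so by Proposition~\ref{rigid} the summands all have the form $S_I$ for $I \subseteq \{1,\ldots,n\}$. Pairwise Ext vanishing combined with Corollary~\ref{subsets}(3) forces these index sets to form a chain $J_1 \subsetneq \cdots \subsetneq J_m$ under inclusion. Because $R$ is Gorenstein, $\Ext^1_R(-,R) = 0$ on $\MCM(R)$, so $R = S_{\{1,\ldots,n\}}$ is compatible with $M$ and must lie in $\add(M)$, placing $\{1,\ldots,n\}$ at the top of the chain. If there were a gap $J_k \subsetneq J' \subsetneq J_{k+1}$, or if $|J_1| > 1$, then $S_{J'}$ (respectively $S_{\{i\}}$ for any $i \in J_1$) would be comparable with every $J_l$ by Corollary~\ref{subsets}(3), hence compatible with $M$ and forced into $\add(M)$, contradicting basicity. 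Thus the chain has length exactly $n$ with $|J_k| = k$, defining a permutation $\omega$ via $J_k = \{\omega(1),\ldots,\omega(k)\}$ and giving $M = S^\omega$.

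For stage two I induct on $n$. The base case $n = 1$ is immediate: $M = R$ cluster tilting forces $\add(R) = \MCM(R)$, so $R = S/(f_1)$ is regular and $f_1 \notin \m^2$. For the inductive step, set $R^\flat := S_{I_{n-1}} = S/(f_{\omega(1)} \cdots f_{\omega(n-1)})$ and $M^\flat := \bigoplus_{k=1}^{n-1} S_{I_k}$; each $S_{I_k}$ with $k \leq n-1$ is annihilated by $f_{I_{n-1}}$ and so is naturally an $R^\flat$-module. Using Corollary~\ref{subsets}(5) to identify $\Ext^1_R$ with $\Ext^1_{R^\flat}$ on pairs in $\MCM(R^\flat)$, and observing that any $X \in \MCM(R^\flat)$ satisfies $X / f_{I_n} X = X$ (so $\Ext^1_R(S_{I_n}, X) = 0$ automatically), one checks that $M^\flat$ is basic cluster tilting in $\MCM(R^\flat)$: a compatible $X \in \MCM(R^\flat)$ lies in $\add_R(M)$, and being annihilated by $f_{I_{n-1}}$, its indecomposable summands must sit among $S_{I_1},\ldots,S_{I_{n-1}}$. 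The inductive hypothesis applied to $R^\flat$ yields $f_{\omega(k)} \notin \m^2$ for $k = 1,\ldots,n-1$.

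To capture the remaining index $\omega(n)$, I would build a dual cluster tilting object via syzygies. By Corollary~\ref{subsets}(2), $\syz_R S_{I_k} = S_{I_k^c}$, and standard dimension shifting (using $\Ext^i(R^m,-) = 0 = \Ext^i(-,R^m)$ for $i \geq 1$ over the Gorenstein ring $R$) combined with the $2$-periodicity of MCM modules over a hypersurface gives $\Ext^1_R(\syz_R M, X) \cong \Ext^2_R(M,X) \cong \Ext^1_R(M, \syz_R X)$. Hence $\Ext^1_R(\syz_R M \oplus R, X) = 0$ iff $\syz_R X \in \add(M)$ iff, after applying $\syz_R$ again, $X \in \add(\syz_R M \oplus R)$. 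Noting $\syz_R S_{I_n} = S_{\emptyset} = 0$, a direct computation identifies $\syz_R M \oplus R$ with $S^{\omega'}$, where $\omega'(k) := \omega(n+1-k)$. Applying the restriction argument of the previous paragraph to $S^{\omega'}$ then gives $f_{\omega(k)} \notin \m^2$ for $k = 2,\ldots,n$, and combining the two ranges proves $f_i \notin \m^2$ for every $i$. The main technical obstacle I anticipate is the syzygy step: one has to verify carefully that $\syz_R M \oplus R$ really is a basic cluster tilting object in $\MCM(R)$, tracking free summands (since $\syz_R^2 \cong \id$ only in the stable category) and confirming that its compatibility class coincides exactly with $\add(S^{\omega'})$.
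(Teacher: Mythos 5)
Your proof is correct and its overall architecture matches the paper's: extract the chain structure via Proposition~\ref{rigid} and Corollary~\ref{subsets}(3), then restrict to $S^\omega_{n-1}$ and induct on $n$ to handle $f_{\omega(1)},\dots,f_{\omega(n-1)}$ via Corollary~\ref{subsets}(5). Where you genuinely diverge is the treatment of the last factor $f_{\omega(n)}$. The paper argues by contradiction: assuming $f_{\omega(n)}\in\m^2$ it picks a non-free $N\in\MCM(S/(f_{\omega(n)}))$, shows (using Corollary~\ref{subsets}(4) and Lemma~\ref{vanish}) that $\syz^R_1(N)$ is compatible with $S^\omega$, concludes $\syz^R_1(N)\in\add(S^\omega)$, and then uses Corollary~\ref{subsets}(2) plus the fact that $N$ is killed by $f_{\omega(n)}$ to force $N$ free, a contradiction. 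You instead observe that $\syz_R S^\omega\oplus R=S^{\omega'}$ for the reversed permutation $\omega'$, verify it is again a basic cluster-tilting object using $\Ext^1(\syz M,X)\cong\Ext^2(M,X)\cong\Ext^1(M,\syz X)$ and hypersurface $2$-periodicity, and rerun restriction-plus-induction on $S^{\omega'}$ to cover $f_{\omega(2)},\dots,f_{\omega(n)}$. Both are valid; the paper's route is more direct and avoids re-verifying the cluster-tilting property, while yours makes the $\syz$-symmetry explicit and never needs the ad hoc choice of $N$. Your worry about the syzygy step is warranted but resolvable: $\syz$ is an auto-equivalence of $\underline{\MCM}(R)$ and all the conditions in Definition~\ref{defRe} are stable-invariant (by Lemma~\ref{vanish} and the fact that $\Ext^1$ on $\MCM$ modules over a Gorenstein ring is unaffected by free summands), so cluster-tilting objects are carried to cluster-tilting objects; tracking free summands then just amounts to adding back $R$. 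Two small slips: the parenthetical ``$X/f_{I_n}X=X$'' reasoning is garbled (all you need is $S_{I_n}=R$, so $\Ext^1_R(S_{I_n},X)=0$ trivially), and in stage one the word ``basicity'' is not quite what produces the contradiction---rather $S_{J'}\in\add(M)$ forces $J'$ to be one of the $J_k$ by Krull--Schmidt, contradicting that $J'$ was skipped. Neither affects the validity of the argument.
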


\begin{proof}
A cluster tilting object is rigid, so by Proposition \ref{rigid} it has to be a direct sum of modules $S_I$. Now \ref{subsets}
shows that it has to be of the form $S^{\omega}$.

For the second assertion, we may assume that our object is $S^{\omega}$ with $\omega(i)=i$. We shall write $S_i$ for 
$S^{\omega}_i$.

We shall use induction on $n$. For $n=1$, then $S^{\omega}=R$, thus $\Ext^1_R(S^{\omega},M)=0$ for any $M\in \MCM(R)$, and they have to be in $\add(R)$ by definition. So $R$ has to be regular. 

Suppose that we can prove our assertion for some value $n\geq 1$. By part (5) of  \ref{subsets} or Lemma 4.9 of \cite{BIKR} one
can deduce that  $ \oplus_{i=1}^{n-1} S_i$ is a cluster tilting object over $S_{n-1}$. By induction $f_i\notin \m^2$ for $1\leq i\leq n-1$. It remains to show that $f_n\notin \m^2$. Suppose it is not the case. Then we can take a module $N\in \MCM(S/(f_n))$ such that $N$ is not free over $S/(f_n)$. Let $M$ be the first $R$-syzygy of $N$. Then $\Ext^1_R(S_i,M)=0$ by Corollary \ref{subsets}. So $M\in \add(\oplus_{i=1}^{n-1} S_i)$ by the definition of cluster tilting. This forces $N$, being the first syzygy of $M$ by periodicity, to be direct sum of modules 
of the form $S_{J_i}$, with $J_i$ the complement of the subset $\{1,\cdots, i\}$. But since $N$ is a module over 
$S/(f_n)$, it has to be a  free $S/(f_n)$-module, a contradiction.

\end{proof}

\begin{thm}\label{mainHyper2}
Let $(S,\m)$ be a complete regular local ring of dimension $2$.  Suppose
that $R=S/(f_1\cdots f_n)$ with every $f_i \notin \m^2$.  Then any  $S^{\omega}$ is a  basic cluster tilting object in $\MCM(R)$. 
\end{thm}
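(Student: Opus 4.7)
The plan is to verify the two defining conditions of cluster tilting for $S^\omega$. \emph{Rigidity} is immediate: the sets $[1]\subset[2]\subset\cdots\subset[n]$ defining $S^\omega$ form a chain under inclusion, so Corollary \ref{subsets}(3) yields $\Ext^1_R(S^\omega_i, S^\omega_j)=0$ for all $i,j$, hence $\Ext^1_R(S^\omega, S^\omega)=0$; the modules $S^\omega_i$ have distinct generic rank vectors, so $S^\omega$ is basic. The nontrivial step is to show that $\Ext^1_R(S^\omega, X)=0$ forces $X\in \add(S^\omega)$ for every $X\in \MCM(R)$; the two Ext-vanishing conditions in the definition of cluster tilting are equivalent by Lemma \ref{vanish}.

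I will proceed by induction on $n$, taking $\omega=\mathrm{id}$ without loss of generality and writing $S_i := S_{[i]}$. The base case $n=1$ is trivial since then $R=S/(f_1)$ is regular and $\MCM(R)=\add(R)$. For the induction step, by Lemma \ref{vanish} the hypothesis $\Ext^1_R(S^\omega, X)=0$ is equivalent to $X\otimes_R S_i = X/f_{[i]}X$ being MCM over $R$ (and hence over $S_i$) for each $i\le n$. Setting $Y:=X/f_{[n-1]}X$, the module $Y$ is MCM over $S_{[n-1]}$; since $Y/f_{[i]}Y = X/f_{[i]}X$ is MCM for $i\le n-1$, Corollary \ref{subsets}(5) together with Lemma \ref{vanish} yields $\Ext^1_{S_{[n-1]}}(\bigoplus_{i=1}^{n-1} S_i, Y)=0$. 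The inductive hypothesis then produces a decomposition $Y\cong \bigoplus_{i=1}^{n-1} S_i^{b_i}$.

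On the other hand, Corollary \ref{subsets}(2) gives $\syz_1 S_{[n-1]} \cong S_{\{n\}} = T_n$ where $T_n := S/(f_n)$, and a direct computation of $\underline{\Hom}_R(T_n, X)$ (noting $\Ann_R(f_n) = f_{[n-1]}R$, so factor-through-free maps in $\Hom_R(T_n,X)$ have image $f_{[n-1]}X$) identifies $\Ext^1_R(S_{[n-1]}, X) = \Ann_X(f_n)/f_{[n-1]}X$. The Ext vanishing at $i=n-1$ thus unpacks to the identity $\Ann_X(f_n) = f_{[n-1]}X$, so $f_{[n-1]}X$ is a torsion-free module annihilated by $f_n$; hence it is free over the DVR $T_n$, of some rank $r_n$ equal to the generic rank of $X$ at the component $(f_n)$. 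This yields a short exact sequence $0\to T_n^{r_n}\to X\to Y\to 0$ in which both outer terms are completely identified.

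The hard part, which I expect to be the main obstacle, is to analyze the extension class in $\Ext^1_R(Y, T_n^{r_n})$ and conclude that $X\cong \bigoplus_{i=1}^n S_i^{a_i}$, where necessarily $a_n=r_n$, $a_{n-1}=b_{n-1}-r_n$, and $a_i=b_i$ for $i\le n-2$ (in particular this forces the inequality $r_n\le b_{n-1}$). Intuitively, the $R^{a_n}$ summand should carry the extension through $a_n$ copies of the canonical sequence $0\to T_n\to R\to S_{[n-1]}\to 0$ plugged into the $S_{[n-1]}$-summands of $Y$. To pin this down I plan to apply the same reasoning in parallel to the submodule $\Ann_X(f_{[n-1]}) \subset X$ (which is MCM over $S_{[n-1]}$ with the same Ext vanishing, so by induction is also isomorphic to $Y$), and to exploit the symmetric vanishing $\Ext^1_R(X, S^\omega)=0$ together with the identification $\Hom_R(X, S_{[n-1]}) = \Hom_R(Y, S_{[n-1]}) \cong Y$ (every $R$-linear map $X\to S_{[n-1]}$ factors through $Y$ since $f_{[n-1]}$ annihilates $S_{[n-1]}$), in order to split off $r_n$ copies of $R$ as a direct summand of $X$ and finish the induction.
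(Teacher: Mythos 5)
Your setup is correct as far as it goes, and several intermediate computations match the paper's: identifying $\syz_1 S_{[n-1]}\cong T_n$, computing $\Ext^1_R(S_{[n-1]},X)=\Ann_X(f_n)/f_{[n-1]}X$, concluding that $f_{[n-1]}X$ is a free module over the DVR $T_n=S/(f_n)$ (this is precisely the paper's observation that $hM$ is a free $R/(g)$-module), and applying the inductive hypothesis to $Y=X/f_{[n-1]}X$ as an MCM $S_{[n-1]}$-module. But the argument stops short at exactly the point you flag as ``the hard part.'' You have an exact sequence $0\to T_n^{r_n}\to X\to Y\to 0$ with $Y\cong\bigoplus_{i<n}S_i^{b_i}$, and you must still show $X\in\add(S^\omega)$. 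This extension does not split in general (e.g.\ $\Ext^1_R(S_i,T_n)\ne 0$ since $S_i\otimes_R T_n$ has finite length), so some genuine input is required; the plan you sketch (using the ``symmetric'' vanishing $\Ext^1_R(X,S^\omega)=0$, which by Lemma~\ref{vanish} is not new information, and passing to $\Ann_X(f_{[n-1]})$) is a speculation rather than a proof, and it is not clear it converges.

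The paper's route to the same conclusion avoids the extension analysis by working with a \emph{submodule} rather than a quotient. Having observed that $hM$ is a free $T_n$-module with basis $ha_1,\dots,ha_l$, it lifts this basis to elements $a_1,\dots,a_l\in M$ and sets $N=(a_1,\dots,a_l)R\subseteq M$. The crucial step is to prove $N$ is \emph{$R$-free}: given a relation $\sum r_ia_i=0$, the freeness of $hM$ over $R/(g)$ forces $r_i=gr_i'$; then $b=\sum r_i'a_i$ satisfies $gb=0$, and the Ext-vanishing $\Ext^1_R(R/(h),M)=(0:_M g)/hM=0$ (the same computation you did) puts $b\in hM$, letting one rewrite the relation so that Nakayama applies and $\syz_1 N=0$. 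The same mechanism shows $g$ is a nonzerodivisor on $M/N$, so $M/N\in\MCM(R)$, and since $N$ is free the sequence $0\to N\to M\to M/N\to 0$ splits, giving $M\cong N\oplus M/N$ with $M/N\in\MCM(S/(h))$ handled by induction. This lifting-and-freeness argument is the missing idea in your proposal; without it, you are left with an unresolved extension problem.
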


\begin{proof}
Without loss of generality we can assume $\omega(i) =i$. We shall use induction on $n$. The case $n=1$ is obvious since $R$ is then regular and $S^{\omega}=R$. 
We also note that from Corollary \ref{subsets}, $\Ext_R^1(X,S^{\omega}) = \Ext_R^1(S^{\omega},X)
= 0$ for all $X\in \add(M)$. What remains is to prove that any $X$ with this vanishing property
is in $\add(M)$.

Suppose we have already proved the assertion for up to $n-1$ with some $n>1$.  Let $M\in \MCM(R)$ such that $\Ext_R^1(M,S^{\omega}) = \Ext_R^1(S^{\omega},M)=0$. Let $S^{\omega}=R\oplus S^{\omega'}$. For notational convenience let $h= f_1\cdots f_{n-1}$ and $g=f_n$. By the induction hypothesis, $S^{\omega'}$ 
is a cluster tilting object over $S/(h)$. 

The exact sequence $0 \to hM \to M \to M/hM \to 0$ shows that the module $hM$ is in $\MCM(R)$. But $gh=0$ in $R$, so  $hM$ is in $\MCM(R/g)$. Since $R/(g)$ is regular, $hM$ is a free $R/(g)$-module. Pick a minimal system of generators 
$ha_1,\cdots, ha_l$ for $hM$. 

We claim that the submodule $N=(a_1, \cdots, a_l)M$ of $M$ is $R$-free.  Let $N_1$ be the first $R$-syzygy of $N$. We claim that $N_1=gN_1$. Pick any element of $N_1$ which represents a relation $\sum_1^l r_ia_i=0$ with $r_i\in R$. We first notice that  for each $i$, $r_i \in gR$ since $\sum_1^l r_i(ha_i)=0$ and the $ha_i$s form a basis for the free $R/(g)$-module $hM$.

For each $i$, let $r_i = gr_i'$ with $r_i'\in R$. Let $b= \sum_1^l r_i'a_i$, so that $gb=0$.
The fact that $\Ext_1^R(R/(h),M)=0$ implies that $b \in hM$ (take a free resolution of $R/(h)$ and compute $\Ext$, one easily see that $\Ext_1^R(R/(h),M)= (0:_gM)/hM$). So we know that
$b = \sum_1^l hs_ia_i$. It follows that $\sum_1^l (r_i'-hs_i)a_i=0$, so the vector with components $r_i'-hs_i$ is in $N_1$. Multiply by $g$ (note that $gh=0$) we get $(r_1,\cdots,r_l)\in gN_1$ as claimed. By Nakayama's Lemma we can conclude that $N_1=0$, so $N$ is $R$-free.

Now, look at the exact sequence $$0 \to N \to M \to M/N \to 0 \ \ (*).$$  We claim that $g$ is  a nonzerodivisor on $M/N$. Suppose $a \in M$ such that $ga \in N$. Then we can write $ga = \sum r_ia_i$. It follows that  $ \sum r_i(ha_i) =0$, and as in the last paragraph, $a_i =ga_i'$ for each $i$. Look back to the first equation we can write $g(a- \sum r_i'a_i)=0$. As in the previous paragraph, $a-\sum r_i'a_i \in hM$, hence $a \in N+hM \subseteq N$. 

The fact we just proved implies that $\depth M/N=1$, in other words, $M/N \in \MCM(R)$.
But since $N$ is $R$-free, the exact sequence (*) splits. So $\Ext^1_R(M/N,S^{\omega'})=0$. Note that as $M/N \in \MCM(R/(h))$ the last part of Corollary \ref{subsets} tells us that    
$\Ext^1_{S/(h)}(M/N,S^{\omega'})=0$, so by induction hypothesis, $M/N \in \add(S^{\omega'})$. Since $M = N \oplus M/N$ and $N$ is $R$-free, we are done. 

\end{proof}

\end{document}